 \def\command@factory#1{%
 \expandafter\def\csname cal#1\endcsname{\mathcal{#1}}
 \expandafter\def\csname frak#1\endcsname{\mathfrak{#1}}
 \expandafter\def\csname scr#1\endcsname{\mathscr{#1}}
 \expandafter\def\csname bb#1\endcsname{\mathbb{#1}}
 \expandafter\def\csname rm#1\endcsname{\mathrm{#1}}
 \expandafter\def\csname bf#1\endcsname{\mathbf{#1}}
}
\newtheorem{theorem}{Theorem}[section]
\newtheorem{prop}[theorem]{Proposition}
\newtheorem{lemma}[theorem]{Lemma}
\newtheorem{cor}[theorem]{Corollary}
\theoremstyle{definition}
\theoremstyle{remark}
\def\and{\ifhmode\unskip\nobreak\fi\ $\cdot$
}
\title{Relative hyperbolicity of hyperbolic-by-cyclic groups}
\author{Fran\c{c}ois Dahmani}
\address{Fran\c{c}ois Dahmani, Institut Fourier, Univ. Grenoble Alpes, F-38000 Grenoble, France}
\email{francois.dahmani@univ-grenoble-alpes.fr}
\author{Suraj Krishna M S }
\address{Suraj Krishna, School of Mathematics, Tata Institute of Fundamental Research, Mumbai 400005, India}
\email{suraj@math.tifr.res.in}
\begin{document}

\maketitle

\begin{abstract}
Let $G$ be a torsion-free hyperbolic group and $\alpha$ an automorphism of $G$. We show that there exists a canonical collection of subgroups that are polynomially growing under $\alpha$, and that the mapping torus of $G$ by $\alpha$ is hyperbolic relative to the suspensions  of the maximal polynomially growing subgroups under $\alpha$. As a consequence, we obtain a dichotomy for growth: given an automorphism of a torsion-free hyperbolic group, the conjugacy class of an element either grows polynomially under the automorphism, or at least exponentially.
%
\end{abstract}

\section{Introduction}

\subsection{Automorphisms and suspensions}

When one considers an automorphism $\alpha$ of a group $G$, one is confronted with several aspects. Geometric: one has a symmetry of the structure of $G$. Dynamical: one has a transformation of $G$ that one can iterate, and possibly take to a limit. Algebraic: one has a new group $G\rtimes_\alpha \mathbb{Z}$. Of course these three points of view have rich interactions. The geometry of the group $G\rtimes_\alpha \mathbb{Z}$ can for instance witness the geometry or the dynamics of $\alpha$, in certain ways.

The most basic examples of this situation are when $G \cong \bbZ^2$, for which $Aut(G) \cong GL_2(\bbZ)$. 
If $\alpha$ has finite order, then $G\rtimes_\alpha \mathbb{Z}$ has a finite index subgroup isomorphic to $\bbZ^3$.   If $\alpha =  \left(\begin{smallmatrix} 1 & 1\\ 0 & 1 \end{smallmatrix}\right)$, then   $G\rtimes_\alpha \mathbb{Z}$ is isomorphic to the (nilpotent) Heisenberg group $\{ \left(\begin{smallmatrix} 1 & a & c \\ 0 & 1 &b \\ 0 & 0 & 1 \end{smallmatrix}\right), a, b, c\in \bbZ\}$. If  $\alpha =  \left(\begin{smallmatrix} 2 & 1\\ 1 & 1 \end{smallmatrix}\right)$, the semi-direct product $G\rtimes_\alpha \mathbb{Z}$, is solvable, but not virtually nilpotent.

In the case where $G$ is the fundamental group of a hyperbolic surface, Thurston   famously classified its automorphisms. Actually, if $\Sigma_g$ is a closed orientable surface of genus $g\geq 2$, we know, since Baer, Dehn and Nielsen,  a correspondence between outer automorphisms of $\pi_1(\Sigma_g)$  (i.e. automorphisms up to post conjugation by an element of $\pi_1(\Sigma_g)$ )  and mapping classes on $\Sigma_g$.  Thurston  classified the mapping classes, and proved that   $\pi_1(\Sigma_g)\rtimes_\alpha \bbZ$ is the fundamental group of a hyperbolic closed $3$-manifold if, and only if, no positive power of $\alpha$ preserves a non-trivial conjugacy class in $\pi_1(\Sigma_g)$ (\cite{Thurston_hyperbolic_3-manifolds}). This latter property is also characteristic of pseudo-Anosov mapping classes of $\Sigma_g$. 

In the context of free groups, Brinkmann proved in \cite{Brinkmann} that the same criterion validates a similar conclusion for free groups: an automorphism   $\alpha$ of a free group $F$   produces a word-hyperbolic group $F\rtimes_\alpha \bbZ$ if and only if it is  atoroidal, in the sense that no positive power of $\alpha$ preserves a non-trivial conjugacy class in $F$.

\subsection{Dropping atoroidality: relatively hyperbolic groups}\label{sec;intro_relhyp}

There are also many non-atoroidal automorphisms of free groups, as well as many non-pseudo-Anosov mapping classes of surfaces. While Thurston's approach manages to elucidate the geometry of those automorphisms as well, it took more effort and time to treat the case of free groups. One reason is that the preserved conjugacy classes of elements, and of subgroups,  in free groups can have a more elaborate towering structure.

The correct geometric notion to treat these cases is relative hyperbolicity.

A group $G$ endowed with a conjugacy closed collection of finitely generated subgroups $\calH$ is relatively hyperbolic (or $G$ is relatively hyperbolic with respect to $\calH$) if it acts co-finitely on a hyperbolic  graph $X$, freely on edges,  on which the elliptic subgroups are exactly the  elements of $\calH$,  each fixing a single vertex, and such that at each infinite valence vertex $v$, the angular metric is proper: given an edge $e$ starting at $v$, and $L>0$, only finitely many edges starting also at $v$ are at distance $\leq L$ from $e$ in $X\setminus \{v\}$. See \cite{bowditch}. Relatively hyperbolic groups form a class that behaves nicely with respect to acylindrical amalgamations and HNN extensions \cite{dahmani_combination}. We will use the combination theorem from that paper (recalled as \cref{thm;combination} below) several times. Let us mention a few cases: the amalgamation of two relatively hyperbolic groups over a subgroup that is maximal parabolic in one of them is again relatively hyperbolic with respect to the same conjugacy classes of subgroups. Similarly, the HNN extension of a relatively hyperbolic group over a subgroup with one attaching map that is maximal parabolic is also relatively hyperbolic. When the attaching maps of the HNN extension, or of the amalgamation, are not maximal parabolic, one needs to extend the collection of parabolic subgroups indeed, and we will have to do that. However, one still can describe the relatively hyperbolic structure with \cite{dahmani_combination}.

In surface groups, the situation is more classical. If $\phi$ is a mapping class of a surface $\Sigma_g$ of genus $g\geq 2$, there is a collection of simple closed curves and an exponent $s$ such that $\phi^s$ preserves the collection and the subsurfaces in the complement, and induces on each subsurface, either the identity, or a pseudo-Anosov mapping class. 

Let $\calC$ denote the collection of subgroups of $\pi_1(\Sigma_g)$ consisting of the conjugates of \begin{enumerate}
    \item the fundamental groups of the subsurfaces on which $\phi^s$ induces the identity, and
    \item the cyclic subgroups generated by the invariant loops on whose neighboring subsurfaces $\phi^s$ does not induce the identity. 
\end{enumerate}
The suspension of the surface by such a mapping class  $\phi^s$ is a 3-manifold, with a decomposition as sub-three manifolds glued together along their boundary components that are homeomorphic to tori (the suspensions of the preserved curves). Some pieces are hyperbolic (with boundary components), some pieces are simply trivial bundles of a subsurface over $S^1$. The resulting fundamental group is, by classical combination theorems \cite{dahmani_combination}, as we mentioned,  relatively hyperbolic, with respect to the direct products of the groups in $\calC$ with the infinite cyclic group centralizing them. The reader with an advance of a couple of paragraphs will have recognized that this collection consists of the suspensions of the maximal polynomially (so to say) growing subgroups of $\pi_1(\Sigma_g)$.

Gautero and Lustig \cite{Gautero_Lustig} were the first to formulate the possibility of a relatively hyperbolic structure in the context of arbitrary (non-atoroidal)  free group automorphisms, which is much more delicate. 
Their ideas were completed by the works of \cite{ghosh} and \cite{dahmani_li}, in the form of a theorem: if $F$ is a finitely generated free group and $\alpha$ is an automorphism, then $F\rtimes_\alpha \bbZ$ is relatively hyperbolic with respect to the suspensions of maximal polynomially growing subgroups (see below for a more precise definition). 

This suggests a more complete geometric picture. The aim of this paper is to encompass these situations in a unified statement.

\subsection{Growth} 

Let $G$ be a word hyperbolic group, and $\alpha$ an automorphism of $G$. What can be said of the geometry of $G\rtimes_\alpha \bbZ$? The aim of this paper is to establish the relative hyperbolicity of $G\rtimes_\alpha \bbZ$ in the case where $G$ is torsion-free, with respect to a minimal family of subgroups. It can happen of course that this family of subgroups is $G\rtimes_\alpha \bbZ$ itself, as it happened for free groups, or surface groups. After all, if $\alpha$ is the identity, $G\rtimes_\alpha \bbZ$ is a mere direct product. But nevertheless our result is sharp in the sense that it is  with respect to a minimal family of subgroups. 

In order to state our main result, we need to use the notion of growth under an automorphism (see \cref{def;growth}) for elements in $G$.  First, we describe how  elements of polynomial growth are organized in $G$.

\begin{theorem}\label{theo;intro1} If $G$ is torsion-free hyperbolic, and $\alpha \in Aut(G)$, there is a finite malnormal family of quasiconvex subgroups $\{A_1, \dots, A_r\}$ such that all elements of $A_i$ are polynomially growing under $\alpha$, and every element that is polynomially growing under $\alpha$ is conjugate in one of the $A_i$. \end{theorem}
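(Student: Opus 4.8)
The plan is to begin by reducing to a convenient power of $\alpha$. Every automorphism of a finitely generated group is bi-Lipschitz for a fixed word metric, so an element $g\in G$ is polynomially growing under $\alpha$ if and only if it is polynomially growing under $\alpha^s$ for any fixed $s\geq 1$. Consequently, a finite malnormal family of quasiconvex subgroups satisfying the conclusion of the theorem for $\alpha^s$ also satisfies it for $\alpha$: malnormality and quasiconvexity do not involve $\alpha$, the requirement that every element of each $A_i$ be polynomially growing is weaker for $\alpha$ than for $\alpha^s$, and every $\alpha$-polynomially-growing element is in particular $\alpha^s$-polynomially-growing, hence conjugate into some $A_i$. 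So I am free to replace $\alpha$ by any of its powers.

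Next I would bring in a canonical splitting of $G$ that $\mathrm{Out}(G)$, and hence a suitable power of $\alpha$, must preserve: combine the conjugacy classes of maximal one-ended free factors of the Grushko decomposition with the canonical cyclic JSJ decomposition (e.g.\ the tree of cylinders of Guirardel--Levitt) of each of them. After passing to a further power, $\alpha$ fixes every vertex orbit and every edge orbit of the resulting canonical $G$-tree $T$, and induces on each vertex group $V$, up to conjugacy, an automorphism of a tame type: on a QH (surface-with-boundary) vertex it is realised by a mapping class; on a rigid vertex it is, after one more power, conjugation by an element of $G$; on the cyclic edge groups and abelian vertex groups it is $\pm\mathrm{id}$; and on the rank-$n$ free part complementary to the one-ended factors the polynomial-growth structure is exactly that furnished by the relative-train-track theory of Feighn--Handel and Levitt. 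Reading off the polynomially growing elements piece by piece --- peripheral curves together with the fixed sub-surfaces on a QH vertex (by Thurston's classification of the realising mapping class), all of a rigid vertex on which $\alpha$ acts by an inner automorphism, all of an abelian vertex, and the known maximal polynomially growing subgroups on the free part --- I would define the $A_i$ as the conjugates of the fundamental groups of the maximal connected sub-graphs-of-groups of $T$ obtained by retaining the ``fully polynomially growing'' vertices and gluing them along those edges whose cyclic edge group lies entirely in the retained part.

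It then remains to verify the four properties. \emph{Quasiconvexity} of the $A_i$ follows from quasiconvexity of JSJ vertex groups together with the fact that a full sub-graph-of-groups of $T$ is quasiconvex. That \emph{every element of $A_i$ is polynomially growing} is the assertion that the automorphism a power of $\alpha$ induces on $A_i$ --- assembled from inner, identity, and finite-order surface pieces glued along invariant cyclic subgroups --- is polynomially growing, which I would establish by a bounded-cancellation estimate along the Bass--Serre tree of $A_i$, tracking how the twist contributed by each edge accumulates under iteration. Conversely, \emph{every polynomially growing element of $G$ is conjugate into some $A_i$}: an element whose axis in $T$ crosses an edge incident to an ``exponential'' vertex, or whose projection to some QH vertex is filled by a pseudo-Anosov component, has exponentially growing conjugacy length, by a ping-pong argument on $T$ combined with exponential growth for pseudo-Anosov maps; so any polynomially growing element is elliptic in one of the retained sub-forests. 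Finally, \emph{malnormality} of $\{A_1,\dots,A_r\}$ comes from almost-malnormality of JSJ vertex groups relative to their incident edges, upgraded using torsion-freeness, together with the fact that an edge separating a retained piece from a discarded one has edge group not contained in the retained piece; and finiteness of the list is immediate from finiteness of $T$.

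The main obstacle, I expect, is the assembly step --- specifically the proof that each $A_i$ is genuinely polynomially growing. This is where the delicate $\mathrm{Out}(F_n)$-type phenomenon (automorphisms of the form $x\mapsto xc$, whose iterates grow linearly rather than staying bounded) resurfaces, now potentially stacked along a whole graph of groups, so the naive expectation ``inner on each piece implies polynomial'' is false and one needs either a relative-train-track-style normal form on $A_i$ itself or a carefully bookkept bounded-cancellation argument across the edges of $T$. A secondary difficulty is promoting the family from almost malnormal to malnormal, and keeping the construction canonical, which forces the repeated passages to powers of $\alpha$ to be carried out in a basepoint-independent way.
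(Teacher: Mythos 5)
There is a genuine gap, and it sits exactly where the paper has to work hardest: the free (infinitely-ended) part of the Grushko decomposition. Your construction defines the $A_i$ as fundamental groups of sub-graphs-of-groups of a canonical tree $T$ spanned by ``fully polynomially growing'' vertices, and your converse step asserts that every polynomially growing element is elliptic in one of these retained pieces. This is false as soon as $G$ splits freely. Take $G=H*\langle s\rangle$ with (after a power and a conjugation) $\alpha(H)=H$ and $\alpha(s)=sh^{-1}$: if $A_0\le H$ is the maximal polynomially growing subgroup fixed by $\alpha$ and $k$ solves $k^{-1}h^{-1}\alpha(k)\in A_0$, then $sk$ is polynomially growing although it is hyperbolic in every Grushko tree, and the maximal polynomially growing subgroup containing it is $\langle A_0, sk\rangle$ --- not a vertex group, and not assembled from retained vertices, since $H$ itself is typically exponentially growing. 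Even in the pure free-group case a unipotent polynomially growing automorphism of $F_n$ makes every element polynomially growing while your recipe, applied to a Grushko tree of $F_n$, retains nothing. So the claim ``an element whose axis crosses an edge at an exponential vertex grows exponentially'' fails for trivial edge groups (it is sound only in the one-ended situation with cyclic edge groups, which is essentially the paper's Kurosh rank $1$ case and the one part of your plan that matches the paper). Relatedly, your appeal to Feighn--Handel/Levitt relative train tracks ``on the rank-$n$ free part'' cannot be carried out as stated, because the free complement of the one-ended Grushko factors is not $\alpha$-invariant in any natural sense; the correct framework is train tracks for free products (Francaviglia--Martino) and the polynomial-growth analysis for free products of Dahmani--Li and Levitt.

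What the paper does instead is an induction on the Kurosh rank: for a maximal $\alpha$-invariant free factor system on which $\alpha$ is fully irreducible, either the associated tree has at least two edge orbits (then the maximal polynomially growing subgroups for the tree metric have smaller Kurosh rank and the induction applies), or one is in the low Scott complexity cases $G=H_1*H_2$ and $G=H*\langle s\rangle$, which are analyzed by hand: normal/syllable forms, the membership equation $k^{-1}h^{-1}\alpha(k)\in A_0$ defining the coset $K$, the twinned subgroup $B_0$ with $B_0^{k}=A_0$, and explicit quasiconvexity and malnormality arguments for $\langle A_0, sk_0\rangle$ and $A_0*B_0^{s^{-1}}$. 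Your proposal has no substitute for this analysis, and without it both the definition of the family and the verification of the converse inclusion collapse. Your closing worry about the assembly step is well placed, but the obstruction is not only that ``inner on each piece'' need not give polynomial growth; it is that the family itself must include subgroups generated by hyperbolic elements of the free splittings, which a vertex-retention construction cannot produce.
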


Achieving this theorem for free groups, or free products, is done by analyzing actions on $\bbR$-trees coming as limits of actions of $G$. While this approach is still a natural one in the context of hyperbolic groups, we had difficulties in this endeavor. Consider an action of a torsion-free hyperbolic group on an $\bbR$-tree with trivial arc stabilizers. Is it true that elliptic subgroups are finitely generated? If the answer to this question is yes, then there is indeed a treatment of this theorem close to that of free groups or free products. Unfortunately we could not decide this question-- although one could be led to think that there can be an argument towards quasiconvexity directly--  so we treated it differently. Our argument is an induction on the Kurosh rank of $G$. Recall that if the Grushko decomposition of $G$ is $G=H_1*\dots * H_k*F_r$ with $H_i$ freely indecomposable, and $F_r$ free of rank $r$, the Kurosh rank of $G$ is $k+r$. The induction step involves treating the case of a free factor system on which $\alpha$ is fully irreducible (no power preserves a larger free factor than those in the system), and the two special cases of a free product of two factors preserved by $\alpha$, and (the more delicate one) of a free HNN extension of an invariant factor. 

\subsection{Main result}

In order to state our main result, we say that, if $G$ is a group, $\alpha$ an automorphism,  and $A$ a subgroup whose conjugacy class is preserved by some positive power $\alpha^s$   of $\alpha$, we consider $s_0$ the minimal such positive exponent, and write $\alpha^{s_0} (A)= h^{-1}Ah$. Then we say that the suspension of $A$ by $\alpha$ in the semidirect product $G\rtimes_\alpha \langle t \rangle$ is the group $A\rtimes_{ad_h \circ \alpha^{s_0}} \langle t^{s_0} h^{-1}\rangle $. 

\begin{theorem}\label{theo;main_intro} If $G$ is torsion-free hyperbolic, and $\alpha \in Aut(G)$, and $\{A_1, \dots, A_r\}$ is a maximal malnormal family of maximal subgroups whose elements are  polynomially growing under $\alpha$, then $G\rtimes_\alpha \bbZ$ is a relatively hyperbolic group with respect to the conjugates of the suspensions of the $A_i$ by $\alpha$.   \end{theorem}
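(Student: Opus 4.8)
The plan is to feed \Cref{theo;intro1} into the definition of relative hyperbolicity recalled above: first produce a relatively hyperbolic structure on $G$, then suspend it, using the combination theorem of \cite{dahmani_combination} for the reductive steps. Since polynomial growth (measured by conjugacy length) is preserved by $\alpha$ and by conjugation, the family $\{\alpha(A_1),\dots,\alpha(A_r)\}$ again satisfies the conclusion of \Cref{theo;intro1}, so the maximality hypothesis forces $\alpha$ to permute the conjugacy classes $[A_1],\dots,[A_r]$. Hence, after replacing $\alpha$ by a suitable power $\alpha^{s}$ and conjugating each $A_i$ individually, I may assume $\alpha(A_i)=A_i$ for all $i$. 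The subgroup $G\rtimes_{\alpha^{s}}\langle t^{s}\rangle$ has finite index in $G\rtimes_\alpha\langle t\rangle$ and meets the suspension of each $A_i$ by $\alpha$ (in the sense of the introduction) in $A_i\rtimes_{\alpha^{s}}\langle t^{s}\rangle$; since relative hyperbolicity of a finite-index subgroup with a given peripheral structure implies it for the ambient group with the associated peripheral structure, it suffices to treat $G\rtimes_\alpha\langle t\rangle$ under the standing assumption that every $A_i$ is $\alpha$-invariant.

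\textbf{A relatively hyperbolic structure on $G$ and relative atoroidality.}
By \Cref{theo;intro1} the family $\calH=\{A_1,\dots,A_r\}$ is malnormal and quasiconvex, so by \cite{bowditch} the hyperbolic group $G$ is hyperbolic relative to $\calH$, the maximal parabolic subgroups being exactly the conjugates of the $A_i$. The dynamical heart of the matter is that $\alpha$ is \emph{atoroidal relative to $\calH$}: if $g\in G$ is not conjugate into any $A_i$, then no positive power of $\alpha$ fixes the conjugacy class of $g$ --- for if $\alpha^{k}(g)=hgh^{-1}$, the conjugacy lengths of the $\alpha^{nk}(g)$ stay bounded, so $g$ is boundedly (hence polynomially) growing and therefore conjugate into some $A_i$ by \Cref{theo;intro1}, a contradiction. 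The same computation shows that any subgroup whose conjugacy class is periodic under $\alpha$ and all of whose elements are polynomially growing is conjugate into one of the $A_i$; thus the suspensions $A_i\rtimes_\alpha\langle t\rangle$ are precisely the subgroups that any relatively hyperbolic structure on $G\rtimes_\alpha\langle t\rangle$ is forced to make parabolic.

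\textbf{Building the hyperbolic graph for the mapping torus.}
I would then induct on the Kurosh rank of $G$, as in the proof of \Cref{theo;intro1}. Along the Grushko decomposition one peels off $\alpha$-invariant free factors and invariant HNN pieces, at each stage presenting $G\rtimes_\alpha\langle t\rangle$ as an amalgam or HNN extension of the smaller mapping tori over subgroups conjugate to $\langle t\rangle$ (or to the edge groups of the splitting) and invoking the acylindrical combination theorem of \cite{dahmani_combination}; one must check acylindricity with respect to the peripheral structures already constructed and, when an edge group is not maximal parabolic, enlarge the peripheral family before the theorem applies, exactly as indicated in \cite{dahmani_combination}. This reduces matters to the case where $\alpha$ is irreducible relative to the free factor system. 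There one builds a graph $X$ on which $G\rtimes_\alpha\langle t\rangle$ acts directly: start from the coned-off Cayley graph $\widehat X$ of $(G,\calH)$, which $\alpha$ acts on because it preserves $\calH$; form the mapping-torus complex by glueing $\widehat X\times[0,1]$ to itself along $\alpha$; and then cone off the suspensions $A_i\rtimes_\alpha\langle t\rangle$. Hyperbolicity of $X$ is obtained from a ``hallways flare'' criterion in the spirit of Bestvina--Feighn, whose hypothesis --- uniform exponential divergence of suspension flowlines --- is the quantitative counterpart of the relative atoroidality above. One then checks that the infinite-valence vertices of $X$ have stabilisers exactly the conjugates of the $A_i\rtimes_\alpha\langle t\rangle$ and that the angular metric at such a vertex is proper in the sense of \cite{bowditch}; this is the desired relatively hyperbolic structure. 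Finally, minimality: each $A_i\rtimes_\alpha\langle t\rangle$ contains subgroups that any relatively hyperbolic structure must make parabolic (for instance a $\bbZ^2$ produced by a power of $\alpha$ fixing a nontrivial element of $A_i$), whereas every element of $G\rtimes_\alpha\langle t\rangle$ not conjugate into one of the $A_i\rtimes_\alpha\langle t\rangle$ is loxodromic in $X$ by the previous step, and distinct $A_i\rtimes_\alpha\langle t\rangle$ are non-conjugate by malnormality of the $A_i$; so the family cannot be shrunk.

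\textbf{Main obstacle.}
The difficulty is concentrated in the irreducible case, in two places. First, upgrading the \emph{per-element} exponential growth supplied by \Cref{theo;intro1} to the \emph{uniform} exponential flaring of suspension flowlines needed for the hyperbolicity/combination criterion: in the free-group case this uniformity is exactly what relative train tracks and the Perron--Frobenius theorem deliver, and no comparable normal form is available for a general torsion-free hyperbolic group, so a new mechanism (a compactness argument on the Gromov boundary, or careful bookkeeping through the Kurosh induction) must be supplied. Second, the combination-theorem accounting when the edge groups of the Grushko-type splitting are not maximal parabolic, where one has to enlarge the parabolic family and verify acylindricity before \cite{dahmani_combination} can be applied, and then recognise which of these enlargements are genuine features of $G\rtimes_\alpha\langle t\rangle$.
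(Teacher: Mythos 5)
There is a genuine gap, and you have in fact located it yourself: the entire weight of the theorem rests on the hyperbolicity of your coned-off mapping-torus complex in the irreducible case, which you propose to obtain from a Bestvina--Feighn ``hallways flare'' criterion whose hypothesis (uniform exponential flaring of suspension flowlines) you acknowledge you cannot verify --- \cref{theo;intro1} only gives per-element exponential growth of non-polynomial elements, and no train-track or normal-form machinery is available for a general torsion-free hyperbolic group to make this uniform. A proof that ends with ``a new mechanism must be supplied'' at its central step is not a proof. Moreover, your induction never addresses its base case: for $G$ one-ended (Kurosh rank $1$) there is no free factor to peel off and no irreducible free-product structure, and the needed hyperbolicity there comes from a completely different source, namely the JSJ decomposition refined at the QH vertices (the pA-tree of \cref{sec;pA}), where the suspensions of the pseudo-Anosov pieces are relatively hyperbolic by Thurston, and the acylindrical combination theorem of \cite{dahmani_combination} assembles them. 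Your sketch, which treats $G$ via its coned-off Cayley graph throughout, does not see this case at all.

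The paper avoids ever having to prove a new flaring statement. In the fully irreducible case with at least two orbits of edges it quotes the free-product theorem of \cite{dahmani_li} (building on Gautero--Lustig and Ghosh), which already asserts relative hyperbolicity of $G\rtimes_\alpha\bbZ$ with respect to suspensions of the maximal polynomially $\calH_m$-growing subgroups; these have strictly smaller Kurosh rank, so the induction hypothesis applies to them, and the telescopic argument (Osin's \cite[Theorem 2.40]{osin}, or Drutu--Sapir) converts the two-level structure into the desired one. The remaining cases, of small Scott complexity ($G=H_1*H_2$ with both factors invariant, and $G=H*\langle s\rangle$ with $H$ invariant), are exactly the cases where your flaring heuristic has no traction --- the whole group is polynomially growing in the tree metric --- and the paper handles them not by constructing a hyperbolic complex but by explicit graph-of-groups manipulations of the mapping torus (amalgamating and HNN-ing the smaller mapping tori over $\langle t\rangle$, $\langle th\rangle$, etc.), discussing parabolicity of the stable letters, enlarging peripheral structures when needed, and identifying the new parabolic subgroup with the suspension of the maximal polynomially growing subgroup produced in \cref{sec;freehnn} (e.g.\ $(A_0*B_0^{s^{-1}})\rtimes\langle t\rangle$ or $\langle A_0, sk_0, t\rangle$). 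Your second listed obstacle (bookkeeping of non-maximal-parabolic edge groups in the combination theorem) is indeed where much of the paper's technical work lies, but it is carried out there in full, whereas in your proposal it remains a declaration of intent.
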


Again, the proof is based on an induction on the Kurosh rank of $G$. The one-ended case, perhaps more familiar to most specialists of hyperbolic groups, is essentially already known, through a standard study of the JSJ decomposition of $G$. The general case was already approached in the relative sense in \cite{dahmani_li}, in the sense that relative hyperbolicity was established relative to polynomially Grushko-growing subgroups: those whose elements have the displacement of the conjugacy classes of their iterate images by $\alpha^n$  grow polynomially when measured in a Grushko tree for $G$. Induction and telescopy of relative hyperbolicity allows to treat some cases, but as in \cref{theo;intro1}, the low complexity withholds some difficulties. We treat separately the case of $G=H_1*H_2$, and $G=H*_{\{1\}}$, both of which are always entirely polynomially growing when measured in their Bass-Serre trees. Whereas in the first case $G=H_1*H_2$, we may assume that $\alpha$ preserves both $H_1$ and $H_2$, in the second case  $G=H*_{\{1\}}$  the automorphism $\alpha$ preserves $H$ but not necessarily the stable letter of the HNN extension.  Special care is given to this case.

Finally, our result is sharp.   If $G\rtimes_\alpha \bbZ $ is relatively hyperbolic,  any conjugacy class that is  polynomially growing (actually sub-exponentially growing) under $\alpha$  must consist of parabolic elements in the relatively hyperbolic structure.  We refer to \cite[Proposition 1.3]{dahmani_msj} for an argument, that we do not reproduce here, based on the exponential divergence of loxodromic elements in relatively hyperbolic groups. We nevertheless mention two corollaries of this consideration, the first of which was also obtained by Coulon, Hilion, Horbez and Levitt. 

\begin{cor} If $\alpha$ is an automorphism of a torsion-free hyperbolic group $G$, then any conjugacy class of elements of $G$ is either at most polynomially growing  under $\alpha$, or at least exponentially growing under $\alpha$. 
\end{cor}

\begin{cor}
Let $\alpha$ be an automorphism of a torsion-free hyperbolic group $G$. Then $G \rtimes_{\alpha} \bbZ$ is hyperbolic if and only if every conjugacy class has exponential growth under $\alpha$.
\end{cor}

We remark that the above is possible only when $G$ is a free product of a finitely generated free group with finitely many (possibly zero)  hyperbolic surface groups.

\subsection{Acknowledgments.}
SKMS was supported by CEFIPRA grant number 5801-1, ``Interactions between dynamical systems, geometry and number theory". Work on the paper was started when SKMS was visiting l'Institut Fourier. He gratefully acknowledges their hospitality. {We are grateful to the referee for several inputs which have improved the paper.}

\section{Decompositions of a hyperbolic group}

\subsection{\texorpdfstring{$G$}{G}-trees and \texorpdfstring{$(G,\calH)$}{(G,H)}-trees}

Let $G$ be a group. A peripheral structure in $G$ is a finite tuple of conjugacy classes of subgroups of $G$. We make the abuse of saying that $H$ is in the peripheral structure $\calH$ if there is a conjugacy class in the tuple $\calH$ that is the conjugacy class of $H$.

A $G$-tree $T$ is a metric tree endowed with an (isometric) action of $G$. It is co-finite if the quotient $G\backslash T$ is finite. It is bipartite if there is a $G$-invariant coloring of vertices in black and white such that no neighbors have the same color. We will write $G_v$ for the stabilizer of a vertex $v$, respectively $G_e$ for the stabilizer of an edge $e$.

If $G$ is endowed with a peripheral structure $\calH$, one says that a $G$-tree $T$ is a $(G,\calH)$-tree if for all subgroups $H$ of $G$ contained in the structure $\calH$, $H$ fixes a point in $T$, and if moreover, any nontrivial stabilizer of a vertex in $T$ is a subgroup in $\calH$. Accordingly, a group $G$, endowed with a peripheral structure $\calH$, has no cyclic splitting relative to that structure, if for all $G$-trees $T$ with cyclic edge stabilizers, in which each subgroup of $\calH$ is elliptic, $G$ has a global fixed point in $T$.

A free factor system for $G$ is a collection of subgroups $\{H_1, \dots, H_m\}$ such that there exists a free subgroup $F$ of $G$ for which $G= H_1*\dots *H_m*F$. In that case, there exists a $G$-tree with trivial edge stabilizers, for which the elliptic subgroups are exactly the subgroups of conjugates of the $H_i$. Conversely, by Bass-Serre theory, any such tree provides, by a correct choice of representatives of vertex stabilizers, a free factor system.

\subsection{Grushko trees}

Recall that Grushko's theorem says that any finitely generated group is the free product of finitely many freely indecomposable subgroups, and of a free group. Moreover it says that, for any two such decompositions, the peripheral structure of the conjugacy classes of the freely indecomposable subgroups differ only by a permutation. One can name this (unordered) peripheral structure the Grushko peripheral structure, and its elements are the Grushko factors. If $G$ is word-hyperbolic, each Grushko factor is itself word hyperbolic, because it is quasiconvex.

A $G$-tree is a Grushko $G$-tree if it is co-finite, with trivial edge stabilizers, and such that for every vertex $v$, either $v$ has trivial stabilizer and valence $\geq 3$, or $v$ has a freely indecomposable non-trivial stabilizer. For readers familiar with Guirardel and Levitt's \cite{GL_out}, a Grushko $G$-tree is a tree in the outer space for free products of the Bass-Serre tree of the Grushko decomposition of $G$.

\subsection{JSJ trees}\label{subsection_jsj_one-ended_complexity}

We now focus on the case of $G$ being a torsion-free hyperbolic group. If $G$ is freely indecomposable, then there is a unique (up to equivariant isometry) bipartite co-finite  $G$-tree $T_{JSJ}$ that satisfies the following conditions: 
\begin{itemize}
 \item each stabilizer of a black vertex is a maximal infinite cyclic subgroup of $G$; 
 \item each stabilizer of a white vertex  is either the fundamental group of a surface with boundary components, for which the adjacent edge subgroups are the conjugates of the boundary component subgroups (which is called a QH vertex), or  has  no cyclic splitting relative to the peripheral structure of the adjacent edge subgroups; 
 \item any edge stabilizer is elliptic in any $G$-tree whose edge stabilizers are cyclic;
 \item all edges have length $1$. 
\end{itemize}
 This tree is called the canonical JSJ tree of $G$. We refer to the abundant literature, and to the reference \cite{GL_JSJ}.

\subsection{Decompositions adapted to an automorphism}
Now, $G$ is a torsion-free hyperbolic group, and $\alpha$ is an automorphism of $G$. We discuss decompositions of $G$ adapted to $\alpha$.
 
\subsubsection{Maximal free factor system for full irreducibility}\label{sec;ffs_fi}
 
Recall that $\{H_1,\dots, H_m\}$ is a free factor system of $G$ if $G$ possesses a free subgroup $F$ (possibly trivial) for which $G=H_1*\dots *H_m*F$.
 
Recall that an automorphism of $G$ preserves a free factor system $\{H_1,\dots, H_m\}$ of $G$ if it sends each $H_i$ on a conjugate of $H_i$.
 
An automorphism $\alpha$ of $G$ is fully irreducible with respect to a preserved free factor system $\{H_1,\dots, H_m\}$ of $G$ if for all $l\geq 1$, if $\alpha^l$ preserves a proper free factor system $\{Y_1,\dots, Y_k\}$ such that each $H_i$ is conjugated into some $Y_j$, then $\{H_1,\dots, H_m\}=\{Y_1,\dots, Y_k\}$.
 
If $G$ is not freely indecomposable, then some positive power of $\alpha$ preserves a (any) Grushko free factor system of $G$. Up to passing to a power (hence to a finite index subgroup in the suspension of $G$ by $\alpha$) we will assume that $\alpha$ preserves the Grushko free factor system. In that case, by \cite[Theorem 8.24]{francaviglia_martino} (see also \cite[Lemma 1.4]{dahmani_li}) there exists a maximal proper preserved free factor system. Let us denote by $\calH_m$ the peripheral structure of the conjugates of these free factors. It follows that $\alpha$ is fully irreducible with respect to $\calH_m$.

 \subsubsection{The pA-tree in the one-ended case} \label{sec;pA}
 
We now assume that $G$ is one-ended, and still torsion-free.
  
Recall that $G$ has a canonical JSJ decomposition (see \cref{subsection_jsj_one-ended_complexity}), on which $\alpha$ induces an automorphism of graphs-of-groups. After taking some power of $\alpha$, one may assume that $\alpha$ preserves the conjugacy class of each vertex group, and that, for each vertex $v$ with elementary or rigid stabilizer, after conjugating $\alpha^k(G_v)$ back on $G_v$ by an element $g_v$ of $G$, ${\rm ad}_{g_v} \circ \alpha^k$  induces an inner automorphism of $G_v$ (as guaranteed by Bestvina-Paulin-Rips-Sela's argument, consider the exposition in  \cite[Prop 3.1]{Dahmani_Guirardel_gafa}).

Now we may also assume that for all vertices $w$ with QH stabilizer, after conjugating $\alpha^k(G_w)$ back on $G_w$ by an element $g_w$ of $G$, ${\rm ad}_{g_w} \circ \alpha^k$ is preserving a decomposition on the underlying surface, inducing the identity on some pieces, and a pseudo-Anosov automorphism on the other pieces. One may then refine $T_{JSJ}$ by blowing up the QH-vertices according to these decompositions, to obtain a new tree $T_{pA}$, on which edge stabilizers are cyclic, that is still preserved by $\alpha^k$, in the sense that $\alpha^k$ induces an automorphism of graphs-of-groups for the quotient graph-of-groups decomposition $G\backslash T_{pA}$. We call as pA-vertices the vertices of the blow up of the QH-vertices in which $\alpha^k$ induces a pseudo-Anosov automorphism. We denote by $V_{pA}$ the set of these vertices in $T_{pA}$.

\subsection{Relative hyperbolicity} Let us collect some results about relatively hyperbolic groups that we will use later in this paper.    Let $G$ be a finitely generated group endowed with a peripheral structure $\calH$ (as defined above), and consider $\Gamma$  a Cayley graph of $G$ with respect to a finite generating set.  Choose conjugacy representatives $H_1, \dots, H_m$ of the elements of $\calH$, and build the cone-off graph $\hat\Gamma$ by adding to $\Gamma$ a vertex for each left coset of $H_i$, and linking it to the elements of its coset.  One says that $G$ is relatively hyperbolic with respect to $\calH$ if $\hat \Gamma$ is hyperbolic and the angular metric at each vertex is proper (as defined in \cref{sec;intro_relhyp}).  We refer to \cite[Section 2]{bowditch} and \cite[Section 3]{hruska}.   A subgroup $H$ is \emph{maximal parabolic} if $[H] \in \calH$, and \emph{parabolic} if it is contained in a maximal parabolic subgroup. We say that a subgroup $K < G$ is \emph{relatively quasiconvex} if there exists a $\kappa > 0$ such that for any geodesic path $\gamma$ in $\hat \Gamma$ between points of $K$, every vertex of $\gamma$ is 
contained in the $\kappa$-neighborhood of $K$ (see \cite[Section 6]{hruska} and \cite[Definition 1.3]{MPW}).

A subgroup $K$ is \emph{full} if for every $H $ such that $[H] \in \calH$, $K \cap H$ is either finite or has finite index in $H$.

The following combination theorem, proved by the first named author, will be used repeatedly in this paper. Recall that a graph-of-groups is \emph{acylindrical} if there exists $k>0$ such that any segment of length $k$ in the Bass-Serre tree of the graph of groups has finite stabilizer.

\begin{theorem}[\cite{dahmani_combination}]\label{thm;combination}
\begin{enumerate}
    \item Let $G$ be the fundamental group of a finite acylindrical graph of relatively hyperbolic groups such that the edge groups are full relatively quasiconvex subgroups of their incident vertex groups. Then $G$ is hyperbolic relative to the set of $G$-conjugates of the maximal parabolic subgroups of the vertex groups.
    \item Let $G_1$ be a relatively hyperbolic group and let $P < G_1$ be maximal parabolic. Let $A$ be a finitely generated group in which $P$ embeds as a subgroup. Then $G=G_1*_PA$ is hyperbolic relative to the set of $G$-conjugates of $A$ and of the maximal parabolic subgroups of $G_1$, except $P$.
    \item Let $P < G_1, G_2$ be a parabolic subgroup of the relatively hyperbolic groups $G_1$ and $G_2$, with $P$ being maximal parabolic in $G_2$. Then $G = G_1 *_P G_2$ is hyperbolic relative to the $G$-conjugates of the maximal parabolic subgroups of $G_1$ and the maximal parabolic subgroups of $G_2$, except $P$.
    \item[(3')] Let $P \cong P'$ be parabolic subgroups of the relatively hyperbolic group $G_1$, with $P$ maximal parabolic and $P$ not conjugated to $P'$. Then the HNN extension $G = G_1*_P$ is hyperbolic relative to the $G$-conjugates of the maximal parabolic subgroups of $G_1$, except $P$.
\end{enumerate}
\end{theorem}

We also record here Dru\c{t}u's theorem on the quasi-isometric invariance of relative hyperbolicity. 

\begin{theorem}[\cite{drutu}] \label{thm;qi_invariance_drutu} Let $G$ be a group hyperbolic relative to $\calH = \{[H_1], \cdots, [H_n]\}$. If a group $G'$ is quasi-isometric to $G$, then $G'$ is hyperbolic relative to $\calH' = \{[H'_1], \cdots, [H'_m]\}$, where each $H'_i$ can be embedded quasi-isometrically in a conjugate of some $H_j$.  
\end{theorem}

The case of automorphisms that permute conjugacy classes of peripheral subgroups is slightly annoying, but can be reduced to the pure case, as follows.

\begin{prop}\label{prop;fi_polynomial_growth} Let $G$ be a torsion-free hyperbolic group and $\alpha$ an automorphism of $G$. Assume that  $G \rtimes_{\alpha^m} \mathbb{Z}$ is hyperbolic relative to the mapping tori by $\alpha^m$ of a family of subgroups $\calH$, that are quasi-convex and malnormal in $G$.   Then $G \rtimes_{\alpha} \mathbb{Z}$ is hyperbolic relative to the mapping tori by $\alpha$ of the family $\calH$.
\end{prop}

Observe that the malnormality condition on elements of $H$ is actually not needed, and follows from the relative hyperbolicity of  $G \rtimes_{\alpha^m} \mathbb{Z}$.    We prove the proposition, mostly following \cite[Lemma 1.21 and Proposition 1.22]{dahmani_li}. 

 \begin{proof}
First  $G \rtimes_{\alpha^m} \mathbb{Z}$ has finite index in $G \rtimes_{\alpha} \mathbb{Z}$,  therefore, by Dru\c{t}u's
theorem on invariance of relative hyperbolicity by quasi-isometry \cite[Thm 5.1]{drutu},    $G \rtimes_{\alpha} \mathbb{Z}$    is
relatively hyperbolic with respect to a collection of subgroups such that each is at bounded
distance from a peripheral subgroup in $G \rtimes_{\alpha^m} \mathbb{Z}$. 

Let $Q$ be a maximal parabolic subgroup of  $G \rtimes_{\alpha} \mathbb{Z}$, and let $P_i= H_i \rtimes \langle t^m g_i\rangle$ be the maximal parabolic subgroup of   $G \rtimes_{\alpha^m} \mathbb{Z}$  that is at bounded distance from it.   It follows that $Q\cap G$ is at bounded distance from $H_i$, hence is quasi-convex in $G$, as $H_i$ is, and has same limit set in $\partial G$ as $H_i$. By properties of peripheral subgroups, both $H_i$ and $Q\cap G$ are malnormal in $G$, and, being moreover quasi-convex and sharing their limit sets,  therefore they are equal: $G\cap Q = H_i$.

Let $h\in Q$. If $h\in G$ we already know that $h\in H_i$. If $h\notin G$, it conjugates $Q\cap G$  in  $G$ (because $G$ is normal) and in $Q$ (because it is in $Q$ itself), hence in  $Q\cap G$. 
 Thus, $(G\cap Q)^h \subset (G\cap Q)$, and iterating this conjugation, $(G\cap Q)^{h^m} \subset (G\cap Q)$, which means $H_i^{h^m} \subset H_i$. However, $h^m\in G \rtimes_{\alpha^m} \mathbb{Z}$, 
  thus, $h^m\in P_i$, and it follows by definition of $P_i$ that $H_i^{h^m} = H_i$, hence $(G\cap Q)^h = (G\cap Q)$ too. Therefore $h$ is in the mapping torus of $Q\cap G$ in $G \rtimes_{\alpha} \mathbb{Z}$, which is the mapping torus of $H_i$ in  $G \rtimes_{\alpha} \mathbb{Z}$.

Conversely, if $h$ is in the mapping torus of $H_i$ in  $G \rtimes_{\alpha} \mathbb{Z}$, it conjugates $Q$ into a group that intersects $Q$ on an infinite subgroup (namely $Q\cap G$, which is $H_i$), therefore it is in the maximal parabolic group containing $H_i$, which is $Q$. \end{proof} 

The following result, Theorem 1.1 of \cite{yang_ext_peripheral}, allows one to extend the collection of maximal parabolic subgroups in a relatively hyperbolic group.
\begin{theorem}[\cite{yang_ext_peripheral}] \label{theo;yang}
Let $G$ be hyperbolic relative to $\calH$. Let $\calH'$ be a conjugacy closed collection of finitely generated groups such that for each $H \in \calH$, there exists $H' \in \calH'$ such that $H \leq H'$. Then $G$ is hyperbolic relative to $\calH'$ if and only if \begin{enumerate}
    \item Each $H' \in \calH'$ is relatively quasiconvex in $(G,\calH)$.
    \item For every $H'_1, H'_2 \in \calH$, and for every $g \in G$, either $gH'_1g^{-1} \cap H'_2$ is finite or $gH'_1g^{-1} = H'_2$.
\end{enumerate}
\end{theorem}

\section{Growth under an automorphism}

\subsection{Definitions, polynomial growth} \label{def;growth}
Let $G$ be a torsion-free hyperbolic group. Let $d_w$ be a word metric (for some chosen generating set). If $g\in G$, one defines $\|g\|_w$ to be the infimum of $d_w( 1, hgh^{-1})$ over $h\in G$. This is an integer, so it is achieved. We will use the notation $|g|$ to designate the word length of $g$. Let $T$ be a metric $G$-tree, one defines $\|g\|_T$ to be the infimum of $d_T( v, gv)$ for $v$ ranging over the vertices of $T$. Again, for every $g$, this infimum is achieved.

In the following $\|\cdot \|$ is either $\| \cdot \|_w$, or $\| \cdot \|_T$ for a $G$-tree $T$ (with trivial edge stabilizers).

Let $\alpha$ be an automorphism of $G$. We say that $g\in G$ has polynomial $\|\cdot\|$-growth under $\alpha$ if there exists a polynomial $P\in \bbZ[X]$ such that $\|\alpha^n (g)\| \leq P(n)$.

\begin{lemma}\label{lem;H-growth}
If $T_1$ and $T_2$ are two $(G, \calH)$-trees such that their elliptic subgroups are exactly the subgroups in $\calH$, then for any automorphism $\alpha$ preserving $\calH$, any $g\in G$ has polynomial $\|\cdot\|_{T_1}$-growth under $\alpha$ if and only if it has polynomial $\|\cdot\|_{T_2}$-growth under $\alpha$. \end{lemma}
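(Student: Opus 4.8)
\emph{Proof plan.} The plan is to prove the stronger statement that the two functions $\|\cdot\|_{T_1}$ and $\|\cdot\|_{T_2}$ on $G$ are bi-Lipschitz equivalent, i.e. that there is a constant $L\geq 1$ with $\|g\|_{T_2}\leq L\,\|g\|_{T_1}$ and $\|g\|_{T_1}\leq L\,\|g\|_{T_2}$ for all $g\in G$. The conclusion is then immediate: if $\|\alpha^n(g)\|_{T_1}\leq P(n)$ for some polynomial $P$, then $\|\alpha^n(g)\|_{T_2}\leq L\,P(n)$, and $L\,P$ is again a polynomial (and symmetrically). In particular the hypothesis that $\alpha$ preserves $\calH$ is used only to make the statement about polynomial growth meaningful on both sides; it plays no role in the argument itself.

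First I would produce a $G$-equivariant map $f\colon T_1\to T_2$ sending vertices to vertices. Choose finitely many representatives of the $G$-orbits of vertices of $T_1$ (here I use that the $G$-trees in question are cofinite). If a representative $v$ has trivial stabilizer, send it to an arbitrary vertex of $T_2$. If $G_v\neq 1$, then $G_v$ is elliptic in $T_1$, so by hypothesis $G_v$ is contained in a conjugate of a member of $\calH$; since $T_2$ is a $(G,\calH)$-tree, that conjugate — hence $G_v$ — fixes a vertex of $T_2$, and I send $v$ to such a fixed vertex. This choice is $G_v$-invariant, so it extends to a well-defined $G$-equivariant assignment on all vertices of $T_1$. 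I then extend $f$ over each edge $e$ by mapping it onto the (possibly degenerate) geodesic of $T_2$ joining the images of the endpoints of $e$; because edge stabilizers in $T_1$ are trivial, no further compatibility is needed and the resulting map is $G$-equivariant (if some $g$ inverts $e$, it simply reverses the corresponding image geodesic). Since there are finitely many edge orbits, $f$ is Lipschitz: one may take $L$ to be the maximum, over the finitely many orbit representatives, of $d_{T_2}$ between the images of the two endpoints of the edge (dividing by the minimal edge length, which for us is $1$).

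Then I would conclude as follows. Given $g\in G$, pick a vertex $v$ of $T_1$ realizing $d_{T_1}(v,gv)=\|g\|_{T_1}$ (possible, as recalled in the text). Since $f(v)$ is a vertex of $T_2$,
\[
\|g\|_{T_2}\ \leq\ d_{T_2}\big(f(v),\,g\,f(v)\big)\ =\ d_{T_2}\big(f(v),\,f(gv)\big)\ \leq\ L\,d_{T_1}(v,gv)\ =\ L\,\|g\|_{T_1}.
\]
Running the same construction with the roles of $T_1$ and $T_2$ exchanged gives a constant $L'$ with $\|g\|_{T_1}\leq L'\,\|g\|_{T_2}$, and replacing both by $\max(L,L')$ yields the claimed bi-Lipschitz equivalence.

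The only step with any content is the construction of $f$, and there the sole input is precisely the hypothesis that $T_1$ and $T_2$ have the \emph{same} elliptic subgroups (equivalently, lie in the same deformation space): this is what allows a vertex stabilizer of one tree to be sent to a fixed point of the other. Everything else is formal. The one finiteness requirement — finitely many edge orbits, needed for the Lipschitz bound — holds because the $G$-trees under consideration are cofinite; this is also the content of the standard fact that cocompact trees in a common deformation space admit equivariant Lipschitz maps in both directions (cf. \cite{GL_out}).
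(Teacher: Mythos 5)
Your proof is correct and follows essentially the same route as the paper: the paper passes to minimal subtrees and invokes an equivariant quasi-isometry between the two trees (which exist because they share the same elliptic subgroups), whence polynomial growth of translation lengths transfers. You simply make this comparison map explicit as an equivariant Lipschitz map sending vertices to vertices, obtaining a multiplicative bound on $\|\cdot\|_{T_1}$ versus $\|\cdot\|_{T_2}$, which is a fleshed-out version of the paper's one-line observation.
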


The lemma allows one to talk about polynomial $\calH$-growth whenever $\calH$ is a free factor system, since this is independent of the choice of the $G$-tree provided its elliptic subgroups are exactly the subgroups in the collection $\calH$.

\begin{proof} We may assume that the trees are minimal since the infimum of displacement will be realized in the minimal invariant subtree. Observe that one has an equivariant quasi-isometry from $T_1$ to $T_2$ (this can be worked out by changing the generating set of the graph of groups). The desired result easily follows. \end{proof}

\subsection{Polynomially growing subgroups}

We say that a subgroup $H$ of $G$ has polynomial $\|\cdot\|$-growth under $\alpha$ if all its elements $h\in H$ have polynomial $\|\cdot\|$-growth under $\alpha$ (we stress that this property depends only on conjugacy classes of the elements $\alpha^n(h)$).

We say that a subgroup $H$ of $G$ is maximal polynomially $\|\cdot\|$-growing under $\alpha$ if it has  polynomial $\|\cdot\|$-growth under $\alpha$ and is maximal, with respect to inclusion, among subgroups with this property.

We say that two subgroups $H_1$ and $H_2$ are twinned by $\alpha$ if there is an integer $n\geq 1$ and an element $g$ such that $\alpha^n(H_1) = g^{-1}H_1g$  and $ \alpha^n(H_2) = g^{-1}H_2 g$. 

We say that a family $\{H_1, \dots H_k\}$ of subgroups is a malnormal family if whenever $A=g_a H_i g_a^{-1}$   and   $B= g_b H_j g_b^{-1}$, if $A\cap B\neq \{1\}$, one has $i=j$ and  $g_b^{-1} g_a \in H_i$.

The following theorem, when restricted to the case of free groups,  is due to Levitt \cite{levitt_growth_types}, complementing  Levitt's work with Lustig. It  was elaborated on in \cite{dahmani_li} in the context  of free products. We extend these works to torsion-free hyperbolic groups, to obtain the following.

\begin{theorem}\label{thm;poly_subgps}
Let $G$ be a torsion-free hyperbolic group, and let $\alpha$ be an automorphism of $G$. Let $\|\cdot \|$ denote either $\| \cdot \|_w$, or $\| \cdot \|_T$ for a $G$-tree with trivial edge stabilizers. 
There exists a finite malnormal family $H_1, \dots, H_k$ of quasiconvex subgroups of $G$ such that:
\begin{itemize} 
 \item for each element $h$ of $H_i$, $h$ has polynomial $\|\cdot\|$-growth under $\alpha$; 
 \item conversely, if $h$ has polynomial $\|\cdot\|$-growth under $\alpha$, then there is $g\in G$ and $i$ such that $ghg^{-1}\in H_i$;
\item if $H_i$ is preserved by $\alpha$, then  
for all $h\in H_i$ the sequence of word lengths $|(\alpha^n (h))|$ is  
bounded above by a polynomial in $n$; 
 \item if $H_i, H_j$  are twinned by $\alpha$, then $i=j$. 
\end{itemize}
\end{theorem}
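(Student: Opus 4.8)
The plan is to prove \cref{thm;poly_subgps} by induction on the Kurosh rank of $G$, following the scheme announced in the introduction, since both the one-ended base case and the reductions to free products and HNN extensions are invoked there.

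\medskip

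\noindent\textbf{Reduction to a power of $\alpha$ and setup.} First I would observe that it suffices to prove the statement for a positive power $\alpha^s$ of $\alpha$: if $H_1,\dots,H_k$ work for $\alpha^s$, then an element $h$ has polynomial $\|\cdot\|$-growth under $\alpha$ if and only if it does under $\alpha^s$ (polynomial domination is insensitive to passing to a subsequence $\|\alpha^{ns}(h)\|$, using that the $\|\alpha^j(h)\|$ for $0\le j<s$ interpolate via the word-metric distortion of $\alpha$), and the malnormality, quasiconvexity, and twinning conclusions are unaffected. By the lemma just proved, the choice of tree $T$ among trees with trivial edge stabilizers and the same elliptic subgroups is immaterial, so I may work with whichever Grushko tree or word metric is most convenient at each stage; I record that the four bullets only need to be checked for one such $\|\cdot\|$. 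After replacing $\alpha$ by a power, assume $\alpha$ preserves the Grushko free factor system $\{G_1,\dots,G_p\}$ of $G$ (or $G$ is freely indecomposable, the base of the induction).

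\medskip

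\noindent\textbf{Base case: $G$ freely indecomposable.} Here I would split into the one-ended case and the case $G\cong\bbZ$ (trivial). For $G$ one-ended, I would use the pA-tree $T_{pA}$ constructed in \cref{sec;pA}: after a further power, $\alpha^k$ induces an automorphism of the graph of groups $G\backslash T_{pA}$ that is inner on each rigid and each elementary vertex group and pseudo-Anosov or identity on each blown-up surface piece. The maximal polynomially growing subgroups are then built from (i) the stabilizers of those vertices whose group is fixed up to inner automorphism together with the incident edge groups — these are quasiconvex since they are JSJ (or refined-JSJ) vertex groups of a hyperbolic group — and (ii) the maximal cyclic edge groups adjacent to pA-vertices, with an element growing polynomially iff it is elliptic in $T_{pA}$ and its $T_{pA}$-vertex is not a pA-vertex (any element with nontrivial image in a pA-surface group grows exponentially by the standard dynamics of pseudo-Anosov maps, in the word metric of $G$). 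Malnormality and the twinning statement follow from the structure of the JSJ graph of groups; the uniform polynomial growth of word lengths on a preserved $H_i$ follows because $\alpha^k$ restricted to such an $H_i$ is, up to conjugation, a composition of inner automorphisms and identity-on-pieces maps, so $|\alpha^{kn}(h)|$ grows at most linearly in $n$.

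\medskip

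\noindent\textbf{Inductive step.} Assume the theorem for all torsion-free hyperbolic groups of Kurosh rank smaller than that of $G$, and that $\alpha$ preserves the Grushko system. Passing to a power, let $\calH_m=\{[L_1],\dots,[L_q]\}$ be the maximal proper preserved free factor system with respect to which $\alpha$ is fully irreducible (\cref{sec;ffs_fi}). I would treat three cases. \emph{Case A: the free factor system is nontrivial and $\alpha$ genuinely permutes/acts fully irreducibly on it with more than one factor or with a free part of rank $\ge 2$ —} here one uses the theory of train tracks / $\bbR$-tree limits relative to $\calH_m$ exactly as in the free-product case of \cite{dahmani_li}: fully irreducible relative automorphisms are either polynomially growing (then the whole system, suitably enlarged, is the family) or exponentially growing, in which case the only polynomially growing subgroups are conjugates of those already detected inside the $L_j$, and the induction hypothesis applied to each $L_j$ with the automorphism $\operatorname{ad}_{h_j}\circ\alpha^{s_j}$ produces the families $H^{(j)}_\bullet$; their union, after merging twinned pairs, is the desired family, and quasiconvexity is preserved because a quasiconvex subgroup of a quasiconvex free factor is quasiconvex in $G$. \emph{Case B: $G=H_1*H_2$ with $\alpha$ preserving both $H_i$ —} apply induction to each $H_i$ and take the union (here there is genuinely no new polynomially growing subgroup arising from the splitting, since every element elliptic in the Bass--Serre tree lies in a conjugate of some $H_i$ and loxodromic elements grow linearly in the tree metric, hence cannot be enlarged beyond the $\bbZ$ they generate unless that $\bbZ$ sits inside a factor). \emph{Case C: $G=H*_{\{1\}}$ with $\alpha$ preserving $H$ but possibly moving the stable letter $t$ —} this is the delicate case flagged in the introduction; apply induction to $H$, but one must additionally understand the growth of words involving $t$: write $\alpha(t)=w t^{\pm 1} w'$ for $w,w'\in H$ and iterate, and control the growth of the resulting "$t$-syllable pattern" together with the $H$-parts; the new maximal polynomially growing subgroups, if any, are twinned cyclic subgroups coming from fixed points of the induced action on the set of conjugacy classes of $H$, and one must verify they fit malnormally with the families from $H$.

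\medskip

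\noindent\textbf{Main obstacle.} I expect Case C (the HNN extension over the trivial group, with $t$ not preserved) to be the hard part: keeping track simultaneously of the polynomial growth coming from inside $H$ and of the growth contributed by the moving stable letter, and then reconciling the malnormal family produced by the induction hypothesis on $H$ with the possibly new twinned cyclic subgroups, requires a careful bookkeeping argument for which the fully-irreducible machinery does not directly apply. A secondary difficulty is ensuring quasiconvexity of the assembled $H_i$ in $G$ at each gluing step — this rests on the combination-theorem behavior of quasiconvexity under free products and acylindrical HNN extensions, which one must invoke with care since the edge groups are trivial and the malnormality of the family has to be maintained throughout the induction.
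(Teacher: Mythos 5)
Your overall scheme --- induction on Kurosh rank, the pA-tree for the freely indecomposable base case, reduction via a maximal free factor system on which $\alpha$ is fully irreducible, and separate treatment of the one-edge-orbit cases $G=H_1*H_2$ and $G=H*\langle s\rangle$ --- is indeed the paper's scheme, and your base case and the case of at least two edge orbits run essentially as in the paper. But your handling of the small Scott complexity cases, which is exactly where the substance of the proof lies, contains a genuine error and a genuine gap. In your Case B you assert that for $G=H_1*H_2$ with both factors preserved ``there is genuinely no new polynomially growing subgroup arising from the splitting,'' because loxodromic elements ``grow linearly in the tree metric.'' This conflates growth of translation length in the Bass--Serre tree with growth of word length, which is what the conclusion must cover (the theorem allows $\|\cdot\|=\|\cdot\|_w$). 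If $A<H_1$ and $B<H_2$ are both preserved by $\alpha$ and consist of elements whose word lengths grow polynomially, then every element of $A*B$ has polynomial word-length growth (apply $\alpha^n$ to a normal form: each syllable stays in $A$ or in $B$ and grows polynomially), yet a nontrivial product $ab$ is loxodromic in the tree and not conjugate into either factor. So the union of the families coming from $H_1$ and $H_2$ fails your second bullet, and twinned subgroups of the two factors must be merged into the single new group $A*B$ precisely to preserve malnormality and the no-twinning condition; this is the third item of the collection in \cref{sec;free_product}, which your proposal omits.

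The same phenomenon defeats your sketch of Case C. Writing $\alpha(s)=sh^{-1}$ with $h\in H$, the new maximal polynomially growing subgroup is not a ``twinned cyclic subgroup'': it is either $A_0 * B_0^{s^{-1}}$ or $\langle A_0, sk_0\rangle$, where $A_0$ is the unique maximal polynomially word-growing subgroup of $H$ fixed by $\alpha$, $B_0$ is the one fixed by conjugation by $th$, and $k_0$ is a solution (when one exists; the solutions form a single left coset of $A_0$, \cref{lem;Kcoset}) of the membership equation $k^{-1}h^{-1}\alpha(k)\in A_0$. Getting there requires the syllable analysis of \cref{sec;freehnn} (\cref{lem;growth_sbs-1}, \cref{lem;type2}, \cref{lem;type1}, \cref{lem;syllable}): deciding which syllables $sas^{-1}$, $sa$, $as^{-1}$, $a$ grow polynomially and showing a word grows polynomially if and only if all its syllables do, followed by a separate argument for quasiconvexity and malnormality of $\langle A_0, sk_0\rangle$ in $G$. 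You correctly identify this as the main obstacle, but the outcome you predict for it is wrong, so the proposal as written does not close the inductive step in either one-edge-orbit case.
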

 
Observe, as it will be useful, that these properties imply that there is at most one subgroup that is conjugate to one of the $H_i$ and that is preserved by $\alpha$. Indeed, assume that there are two, $A_1$ and $A_2$,  take non-trivial elements in both $a_i\in A_i$, the group $\langle a_1, a_2 \rangle $ that they generate consists entirely of elements $g$ such that the sequence of word lengths $|(\alpha^n (g))|$  is polynomially growing, therefore, it is a subgroup of a certain conjugate $B$ of one of the $H_j$. But $B$ intersects both $A_i$, and is different from one of them,  which contradicts the malnormality of the family $H_1, \dots, H_k$. Similarly, one can show that if the sequence of word lengths $|(\alpha^n (h))|$  is polynomially growing then $h$ is in a polynomially growing subgroup that is preserved by $\alpha$ (the element $h$ belongs  in a polynomially growing subgroup $A$, and its image $\alpha(h)$ is also contained in a polynomially growing $B$, and the subgroup $\langle h, \alpha(h) \rangle$ also, thus by malnormality, $B=A$).

We will prove this result in the next subsection, by an induction argument. We only indicate here the vocabulary and some simple facts.
 
We call the (unordered) tuple of conjugacy classes of the subgroups $H_i$, the polynomially growing peripheral structure under $\alpha$. If $H$ is in the peripheral structure, we say that $H$ is maximal polynomially growing under $\alpha$. Of course the family (even the cardinality $k$) depends on the choice of $\|\cdot \|$ among $\| \cdot \|_w$, or $\| \cdot \|_T$.

If $d_w$ and $d_{w'}$ are two word metrics of $G$, the (unordered) polynomially growing peripheral structures under $\alpha$ for $\|\cdot\|_w$ and for $\|\cdot\|_{w'}$ are equal. Similarly, if $T_1$ and $T_2$ are two Grushko $G$-trees, the polynomially growing peripheral structures under $\alpha$ for $\|\cdot\|_{T_1}$ and for $\|\cdot\|_{T_2}$ are equal.

If $T$ is a $G$-tree, and $G_v$ a vertex stabilizer in $T$, then $G_v$ is a subgroup of one of the maximal polynomially growing subgroups under $\alpha$ for $\|\cdot\|_T$. In particular, if $G$ is freely indecomposable, $G$ is the unique maximal polynomially growing subgroup under $\alpha$ for $\|\cdot \|_T$, when $T$ is a Grushko tree.

Any maximal polynomially growing subgroup under $\alpha$ for $\| \cdot \|_w$ is a subgroup of a maximal polynomially growing subgroup under $\alpha$ for $\| \cdot \|_T$ (for any $G$-tree $T$).

\subsection{Proof of \texorpdfstring{\cref{thm;poly_subgps}}{Polynomial-subgroups Theorem}}
 
Let us prove the theorem by induction on the Kurosh rank of $G$ (endowed with the Grushko free factor system). Recall that if $G= H_1*\dots *H_k *F_r$, with  $F_r$ free of rank $r$, then the Kurosh rank of the free factor system $\{H_1,\dots, H_k\}$ is $k+r$. In the Grushko free factor system, all $H_i$ are freely indecomposable.

\subsubsection{Kurosh rank $1$}

If the Kurosh rank is $1$, then either $G$ is cyclic and there is nothing to prove, or it is freely indecomposable. In the latter case, the result appears in \cite{Gautero_Lustig_one-ended}. Let us discuss  a possible way to cover it. We consider the pA-tree $T_{pA}$ of \cref{sec;pA}. Each component of the complement of the pA-vertices is a subtree of $T_{pA}$ whose leaves (edges whose one end has valence $1$) are adjacent to pA-vertices. {We call these components collapsible components.} {We consider the following $G$-tree $\bar T_{pA}$: its vertices are the pA-vertices of $T_{pA}$, together with one vertex for each collapsible component,   and there is an edge  between a pA-vertex $v$ and a collapsible component vertex if and only if $v$ is a leaf of the component.} It is straightforward that this is a tree endowed with a $G$-action.   

Recall that $\alpha$ induces an automorphism of the tree $T_{pA}$, hence it induces also an automorphism of $\bar T_{pA}$: there is a map $\bar T_{pA}\to \bar T_{pA}$  equivariant for the original action of $G$ precomposed by $\alpha$, that is a tree automorphism, thus preserving adjacency and length.

For each vertex of $\bar T_{pA}$, its stabilizer in $G$  is either the stabilizer of a pA-vertex in $T_{pA}$ or is a one-ended hyperbolic group (relative to its peripheral structure)  whose JSJ tree has no pA-vertex for the automorphism $\alpha$ (its minimal subtree in $T_{pA}$ is a collapsible sub-tree).  

We claim that the maximal polynomially growing subgroups are the stabilizers of the vertices that are not pA-vertices.  It is clear that each such group is polynomially growing. Take an element $g$ not conjugate to one of them. If it is elliptic in $\bar T_{pA}$, it is in the stabilizer of a pA-vertex, hence its conjugacy class  grows exponentially fast. If it is not elliptic, it is loxodromic.

Observe now that $G\backslash \bar T_{pA}$, the associated graph-of-groups splitting of $G$, is bipartite with one class of vertices being the $pA$-vertices. If $g$ is loxodromic in   $\bar T_{pA}$, the cyclically reduced normal form of its conjugacy class in the graph-of-groups is of the form $g_0 e_0 g_1 e_1 \dots g_ke_k$ with $k\geq 1$,  $g_i$ an element of a vertex group (possibly trivial), and $e_i$ the edge generator between the vertices of $g_i$ and $g_{i+1}$.  After cyclic permutation, we may assume that $g_1$ is an element of a $pA$-vertex group.   If $e_0 = \bar e_1$, by definition of normal form, $g_1$ is not in the edge group of $e_0$.   After post-conjugation, we may assume that (some power of) $\alpha$ preserves the group $G_1$  of $g_1$ and the adjacent edge group of $e_0$.   Since moreover, $\alpha$ induces a pseudo-Anosov automorphism of the vertex group $G_1$,    under the iteration of the automorphism $\alpha$, the  conjugacy class $[\alpha^r(g_1)]$
 has to eventually  grow exponentially fast. Finally, observing that the images by $\alpha$ of the given cyclically reduced normal form are cyclically reduced normal forms of the images (with $\alpha(e_0) = \alpha(e_1)^{-1} = he_0$ for some peripheral $h \in G_1$, hence not growing under $\alpha$),    we obtain that the length of $[\alpha^r(g)]$ is exponential in $r$. If  now $e_0 \neq  \bar e_1$, one must take into account that possibly $g_1$ can be trivial. However, then again after post-conjugation, we may assume that (some power of) $\alpha$ preserves the group $G_1$ of the end vertex of $e_0$,  containing $g_1$,  and also the edge group of $e_0$ (hence the group $G_0$, containing $g_0$, too). However, in that case, the pseudo-Anosov automorphism of the underlying surface of the group $G_1$ has to send the boundary component subgroup  corresponding to $e_1$ to a conjugate by a non-peripheral element $h_1 \in G_1$ (otherwise it would preserve a pair of pants containing both boundary components). It follows that $\alpha^r (g_0 e_0 g_1 e_1 ) = \alpha^r(g_0) e_0 \alpha^r(g_1)  \alpha^{r-1} (h_1)  \dots  \alpha(h_1) h_1    e_1 $.   The sequence  $\gamma_r = \alpha^r(g_1)  \alpha^{r-1} (h_1)  \dots  \alpha(h_1) h_1$ in $G_1$ corresponds to the sequence of iterates of the pseudo-Anosov automorphism applied to an arc joining two boundary components of the surface. By property of the pseudo-Anosov map, the elements $\gamma_r$ need to be all different when $r$ varies, as otherwise a power of the  pseudo-Anosov map would preserve a non-peripheral arc.    If the growth exponent of the pseudo-Anosov map is $\lambda >1$, then for $1<\lambda'<\lambda $, eventually (for  $r$ large enough),  $|\gamma_{r+1} |\geq \lambda'|\gamma_r|  - |h_1| $, and also  $|\gamma_{r} |> |h_1|/(\lambda'-1) $,    which ensures an exponential growth of the sequence $|\gamma_r|$. 

If now two vertex groups of polynomial growth are twinned by $\alpha$, let $v_1,v_2$ be the two considered vertices, and assume that they are different. Thus, there exists a pA-vertex $w$ in the segment $[v_1, v_2]$, and let $e_0, e_1$ be the two edges on this segment issued from $w$. After possible post-conjugation, and after taking a possible power of $\alpha$, we have (denoting also by $\alpha$ the induced automorphism of $\bar T_{pA}$),  $\alpha([v_1, v_2] ) = [v_1,v_2] $ and  since $\alpha$ is a tree-automorphism, $\alpha(w)= w$, and $\alpha(e_0) = e_0, \alpha (e_1)=e_1$.    According to the previous discussion,  this means $e_0=e_1$, which contradicts the assumption that $v_1\neq v_2$. 
 
 \subsubsection{Full irreducibility}
 
Assume the Kurosh rank is higher. Then consider   a maximal free factor system $\calH$ for which $\alpha$ is fully irreducible (see \cref{sec;ffs_fi}), and $T$ an associated Bass-Serre tree. 
 
Assume that in the tree $T$ there are at least $2$ orbits of edges. Then, as proved in \cite[Prop 1.13]{dahmani_li}, there is a  malnormal collection of maximal polynomially $\|\cdot \|_T$-growing subgroups, each having lower Kurosh rank, and each being a hyperbolic group, relatively quasiconvex with respect to $\calH$,   that is finite up to conjugacy, and that contains all elements of  polynomial $\|\cdot \|_T$-growth, and that finally satisfy the no-twinning condition. 
 
We may apply the induction assumption to them and obtain a deeper conjugacy-finite collection of subgroups of polynomial $\|\cdot\|_w$-growth that satisfy the desired properties. Since  any element of polynomial $\|\cdot \|_w$-growth has to be of polynomial $\|\cdot \|_T$-growth, we have the result for $G$.

Assume that in the tree $T$ there is one orbit of edges. This means that the free factor system $\calH$ either decomposes $G$ as $G= H_1 * H_2$, or as $G= H*_{\{1\}} \simeq H*\bbZ$.

\subsubsection{Case of a free product}\label{sec;free_product}

In the first case, $G= H_1 * H_2$. We will give a description of the polynomially growing subgroups, in terms of those in $H_1$ and $H_2$.  We may assume, up to taking $\alpha^2$  that $\alpha$ preserves the conjugacy class of $H_1$, and that of $H_2$. After postconjugating (which does not affect the polynomial growth of elements) we may assume that $\alpha(H_1)= H_1$, and therefore, since $\alpha(H_1)\cup \alpha(H_2) $ must generate $G$, $\alpha(H_2)$ is conjugate to $H_2$ by an element of $H_1$.  So, after postconjugating again  we may assume that $\alpha$ preserves both $H_i$. We also may apply the induction hypothesis to $H_1$ and $H_2$. The difference with the previous case is that it is possible that elements of $G$ are of polynomial $\|\cdot \|_w$-growth without being conjugate to either $H_1$ or $H_2$. However, to be of polynomial $\|\cdot \|_w$-growth their normal forms have to consist only of $|\cdot|$-polynomially growing elements of $H_1$ and $H_2$ in subgroups that are preserved by $\alpha$ in $G$  (the normal form structure is preserved by $\alpha$). 
Observe that there can be only two such groups (any two in the same $H_i$ are not twinned by the induction assumption).  Our collection of subgroups is therefore 
\begin{itemize} 
\item $H_1$-conjugacy representatives of subgroups $A<H_1$,  that are   maximal polynomially $\|\cdot \|_w$-growing for $\alpha$, and have no twin in $H_2$,  
\item $H_2$-conjugacy representatives of subgroups $B<H_2$, that are maximal polynomially $\|\cdot \|_w$-growing for $\alpha$, and have no twin in $H_1$, 
\item and the $G$-conjugacy representatives of the subgroup $A*B$, with $A<H_1$ and $B<H_2$, both preserved by $\alpha$, both respectively maximal polynomially $| \cdot |$-growing for $\alpha$ (there is a unique such pair $(A,B)$, as noted above). 
\end{itemize}    
The desired properties in the statement are easily verified.

\subsubsection{Case of a free HNN extension}\label{sec;freehnn}

In the second case, $G= H * \langle s \rangle $. We will give a description of the polynomially growing subgroups, in terms of those in $H$, but the behavior of $s$ makes it more involved than earlier. We may as above assume that $\alpha$ preserves $H$ and that $\alpha(s) = sh^{-1}$ with $h\in H$. Again, we apply the induction hypothesis for $H$, and obtain a collection of maximal polynomially $\|\cdot \|$-growing subgroups in $H$ that are quasiconvex in $G$. There might be more elements that are polynomially growing.

Observe that by the no-twinning property for $H$, there is a unique maximal polynomially $\|\cdot \|_w$-growing subgroup of $H$ that is fixed by $\alpha$ (however, it can be trivial!). Let $A_0$ be this subgroup of $H$.

\begin{lemma}\label{lem;growth_sbs-1}  For $b\in H$, the sequence  $(\alpha^m(sbs^{-1}))_m$ is a sequence of elements whose word length in $G$ is bounded by a polynomial, if and only if $b$ is in the maximal (possibly trivial)  polynomially growing subgroup fixed by conjugation by $th$. If $B_0$ denotes this latter group, $B_0^{s^{-1}}$ is normalized by $t$.  \end{lemma}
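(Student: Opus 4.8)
The plan is to analyze the word length of $\alpha^m(sbs^{-1})$ using the semidirect product structure. Recall that $\alpha$ preserves $H$ and $\alpha(s) = sh^{-1}$, so by induction $\alpha^m(s) = s h_m^{-1}$ for some $h_m \in H$, and more importantly the conjugate $\alpha^m(sbs^{-1})$ can be tracked. First I would pass to the suspension $G \rtimes_\alpha \langle t \rangle$ and work with the element $t$, noting that conjugation by $t$ realizes $\alpha$. Inside the suspension, $ts t^{-1} = \alpha(s)$, and a cleaner way to phrase the computation is: the element $s^{-1}t \in G \rtimes_\alpha \langle t \rangle$ conjugates $b \in H$ by $s^{-1} \circ \alpha$ followed by ... — more precisely, I would compute $t^{-1} s^{-1} \cdot \alpha^m(sbs^{-1}) \cdot s t$ and compare with the iteration of $b$ under conjugation by $th$ (equivalently by $s^{-1}(\cdot)s$ composed appropriately). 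The key identity to establish is that $s^{-1}\alpha^m(sbs^{-1})s$ is conjugate in $H$ to $(\mathrm{ad}_{th})^m(b)$-type iterates, i.e. that the displacement of $\alpha^m(sbs^{-1})$ in $G$ is polynomially bounded if and only if the word length $|(\mathrm{ad}_{th})^m(b)|$ (inside the relevant group, measured in $H$) is polynomially bounded.

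Concretely, the first step is to observe that $\alpha(sbs^{-1}) = sh^{-1}\alpha(b)hs^{-1} = s(h^{-1}\alpha(b)h)s^{-1}$, so $s^{-1}\alpha(sbs^{-1})s = h^{-1}\alpha(b)h = (\mathrm{ad}_{h^{-1}} \circ \alpha)(b)$. Wait — I must be careful with the direction: iterating, $s^{-1}\alpha^m(sbs^{-1})s = (\mathrm{ad}_{h_m^{-1}} \circ \alpha^m)(b)$ where $h_m$ accumulates the cocycle, and $\mathrm{ad}_{h^{-1}}\circ\alpha$ is (up to inversion conventions) precisely conjugation by $th$ restricted to $H$ in the suspension. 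So the second step is: since $s$ is a fixed letter, $\|\alpha^m(sbs^{-1})\|_w$ in $G$ is comparable (up to an additive constant $2|s|$ and the distortion of $H$ in $G$, which is undistorted since $H$ is a free factor hence quasiconvex) to $\|(\mathrm{ad}_{h_m^{-1}}\circ\alpha^m)(b)\|_w$ measured in $H$. The third step is to invoke the characterization of polynomially growing subgroups from \cref{thm;poly_subgps} applied to the automorphism $\mathrm{ad}_{th}$-restricted-to-$H$ (or equivalently to $\alpha$ but tracking the twisted conjugacy class): the sequence $(\mathrm{ad}_{h_m^{-1}}\circ\alpha^m)(b)$ is polynomially growing in word length if and only if $b$ lies in the maximal polynomially growing subgroup that is \emph{fixed} (not merely invariant up to conjugacy) by $\mathrm{ad}_{th}$ — and such a unique maximal fixed subgroup exists by the remark following \cref{thm;poly_subgps} (the "at most one preserved conjugate" observation). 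Call this $B_0$.

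For the final clause, I would compute directly: $t$ normalizes $B_0^{s^{-1}} = s^{-1}B_0 s$ if and only if $t s^{-1} B_0 s t^{-1} = s^{-1} B_0 s$, i.e. $\alpha(s^{-1} B_0 s) = s^{-1} B_0 s$. Now $\alpha(s^{-1}B_0 s) = (sh^{-1})^{-1}\alpha(B_0)(sh^{-1}) = h s^{-1}\alpha(B_0) s h^{-1}$, so the condition becomes $s^{-1}\alpha(B_0)s = h^{-1}(s^{-1}B_0 s)h$, equivalently $\alpha(B_0) = (s h^{-1}s^{-1})\cdot B_0 \cdot (sh s^{-1})$ — hmm, this is getting tangled; the clean statement is that $B_0$ being fixed by $\mathrm{ad}_{th}$ as a subgroup of $H$ translates, after conjugating by $s^{-1}$ to move everything into position, into $B_0^{s^{-1}}$ being normalized by $t$ in the suspension. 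I would spell this out as: by definition of $B_0$, $(th)B_0(th)^{-1} = B_0$ inside $G\rtimes_\alpha\langle t\rangle$; since $th = s(s^{-1}th)$ and $s^{-1}th$ is (up to the identity $ts^{-1} = \alpha^{-1}(s^{-1})t = \dots$) the element $t^{s^{-1}}$ conjugated appropriately, one gets $t^{s^{-1}}$ normalizes $B_0^{s^{-1}}$.

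The main obstacle I anticipate is bookkeeping the cocycle $h_m$ and the conjugation conventions correctly — in particular verifying that "$b$ fixed by conjugation by $th$" is genuinely the right condition rather than "$b$ in a subgroup invariant up to conjugacy under $\mathrm{ad}_h\circ\alpha$", and that the passage between word length in $G$ and the twisted growth in $H$ does not lose the polynomial-versus-exponential dichotomy. The quasiconvexity of $H$ in $G$ (as a free factor) is what guarantees no distortion, so that part is safe; the delicate point is that the \emph{fixed} (not merely conjugacy-invariant) subgroup is forced, which is exactly where the no-twinning conclusion of \cref{thm;poly_subgps} and the uniqueness remark after it are used. I would therefore structure the write-up as: (1) the cocycle computation reducing to twisted iterates in $H$; (2) the undistortion argument; (3) the application of \cref{thm;poly_subgps} to identify $B_0$ as the unique maximal polynomially growing subgroup fixed by $\mathrm{ad}_{th}$; (4) the conjugation identity giving that $B_0^{s^{-1}}$ is $t$-normalized.
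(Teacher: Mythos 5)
Your proof of the main equivalence is essentially the paper's own argument. The computation $\alpha(sbs^{-1})=s\,(h^{-1}\alpha(b)h)\,s^{-1}$ shows $\alpha^m(sbs^{-1})=s\,\beta^m(b)\,s^{-1}$, where $\beta$ is conjugation by $th$ restricted to $H$; in particular there is no genuine cocycle to track, since your $\mathrm{ad}_{h_m^{-1}}\circ\alpha^m$ is just $\beta^m$ for the single automorphism $\beta$. Normal forms in $H*\langle s\rangle$ (equivalently, undistortion of the free factor $H$) reduce the question to polynomial growth of $|\beta^m(b)|$, and the third bullet of \cref{thm;poly_subgps} together with the remark following it, applied to $\beta$ (whose polynomially growing subgroups coincide with those of $\alpha|_H$, the two being in the same outer class), gives exactly ``polynomial word-length growth iff $b$ lies in the unique maximal polynomially growing subgroup preserved by $\beta$'', i.e.\ $b\in B_0$. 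The paper, instead of citing that remark, re-runs its malnormality argument with $b$ and $b^{th}$; the content is the same.

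The final clause is where your write-up does not deliver. What you end up asserting, that $t^{s^{-1}}$ normalizes $B_0^{s^{-1}}$, is not the statement (the claim is that $t$ itself normalizes $B_0^{s^{-1}}$), and the tangle comes from reading $B_0^{s^{-1}}$ as $s^{-1}B_0s$. With that reading the claim is actually false whenever $h\neq 1$ and $B_0\neq 1$: conjugating $s^{-1}B_0s$ by $t$ gives $hs^{-1}\alpha(B_0)sh^{-1}$, whose nontrivial elements have reduced form $h\,s^{-1}\,x\,s\,h^{-1}$ and hence meet $s^{-1}B_0s$ only in the identity. The paper's convention is $x^{g}=g^{-1}xg$ (as in its use of $b^{th}$ and in \cref{coro;A_0B_0}), so $B_0^{s^{-1}}=sB_0s^{-1}$, and then the verification is one line: $\alpha(sB_0s^{-1})=s\,h^{-1}\alpha(B_0)h\,s^{-1}=s\,(th)^{-1}B_0(th)\,s^{-1}=sB_0s^{-1}$, so conjugation by $t$ preserves $sB_0s^{-1}$. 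Equivalently, in the suspension one has $s^{-1}ts=th$, so conjugating the defining relation $(th)^{-1}B_0(th)=B_0$ by $s$ yields $t^{-1}\bigl(sB_0s^{-1}\bigr)t=sB_0s^{-1}$; this is the paper's computation $B_0^{s^{-1}t}=B_0^{ths^{-1}}=B_0^{s^{-1}}$. So the idea is right, but as written this step fails and needs that correction.
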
  
\begin{proof} Assume that $b\in B_0$. $\alpha(sbs^{-1}) = sh^{-1} t^{-1} b th s^{-1}$, which is $s b_1 s^{-1}$ for  $ b_1$ in $B_0$, image of $b$ by a postconjugation of $\alpha$. Iterating the use of $\alpha$, we get a sequence $b_i$ whose length grows polynomially by assumption on $B_0$. Conversely, if $(\alpha^m(sbs^{-1}))_m$ grows at most polynomially under $\alpha$, then $b$ belongs to a maximal $\|\cdot \|_w$-polynomially growing subgroup of $H$ under $\alpha$. The element $b^{th}$ appears in the identity $\alpha^m (sbs^{-1}) = \alpha^{m-1} (sb^{th} s^{-1})$, therefore, it too belongs to the set of elements $\beta$ for which $(\alpha^m(s\beta s^{-1}))_m$ grows at most polynomially in word length. Because we are considering the word length, the group generated by $b$ and  $b^{th}$ also belong to this set, and we thus find that   $b$ and  $b^{th}$ belong to the same $\|\cdot \|_w$-polynomially growing subgroup. This reveals that this subgroup is normalized by $th$.      To conclude, $B_0^{s^{-1} t} = B_0^{t hs^{-1}} = B_0^{s^{-1}}$, hence the last assertion. \end{proof}

We introduce the following notation that we will use in the next computations: if $k\in H$, we set $k' =k^{-1} h^{-1} \alpha(k) $.  We also introduce $K$, the set of solutions of the membership equation $k'\in A_0$, that is, $K=\{k\in H, k^{-1} h^{-1} \alpha(k)\in A_0 \}$. It could be empty.

\begin{lemma}\label{lem;type2} If $k\in H$, the element $sk$ has polynomial $|\cdot |$-growth if and only if $ k \in K $. \end{lemma}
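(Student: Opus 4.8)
The plan is to compute the growth of $sk$ under $\alpha$ directly from the relation $\alpha(s) = sh^{-1}$ and to reduce it to a statement about the element $k' = k^{-1}h^{-1}\alpha(k)$ living in the factor $H$, where the induction hypothesis (via \cref{thm;poly_subgps} applied to $H$, and in particular the identification of $A_0$) is available. First I would observe that $\alpha(sk) = \alpha(s)\alpha(k) = sh^{-1}\alpha(k) = s(k k^{-1} h^{-1}\alpha(k)) = (sk)(sk)^{-1} s k'$; more usefully, conjugating, one gets $(sk)^{-1}\alpha(sk) = k^{-1}s^{-1}\cdot s h^{-1}\alpha(k) = k^{-1}h^{-1}\alpha(k) = k'$. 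So $\alpha(sk) = (sk)\cdot k'$ with $k'\in H$. Iterating, $\alpha^m(sk) = (sk)\cdot \prod_{i=0}^{m-1}\alpha^{i}(k')$ once one is careful about the order of the product and the fact that the $\alpha^i(k')$ are elements of $H$ conjugated along the way; the key structural point is that, because $\langle s\rangle$ is an HNN extension with trivial associated subgroups, the normal form of $\alpha^m(sk)$ in $G = H*\langle s\rangle$ is $s$ followed by a word in $H$, and its word length in $G$ is, up to a bounded multiplicative and additive error, the word length in $H$ of that accumulated $H$-part.

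Next I would make this precise: $sk$ has polynomial $|\cdot|$-growth in $G$ if and only if the sequence of $H$-elements obtained by accumulating $k'$ and its $\alpha$-twisted iterates has polynomial word length in $H$. Since conjugation in $H$ changes word length by a bounded amount does \emph{not} hold (that is false), the correct reformulation must be in terms of $\|\cdot\|_w$ on $H$ together with the preservation of the conjugacy-class data; this is exactly the mechanism used in the proof of \cref{lem;growth_sbs-1}, where passing from $b$ to $b^{th}$ and invoking that word length controls the subgroup generated lets one land inside a single polynomially growing subgroup. Concretely, I expect to show: the relevant accumulated element is, up to bounded error and up to the twinning bookkeeping, governed by whether $k'$ lands in a polynomially $\|\cdot\|_w$-growing subgroup of $H$ that is \emph{fixed} by $\alpha$ — and by the no-twinning property for $H$ (already invoked right before \cref{lem;type2}) that fixed subgroup is unique and equals $A_0$. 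Hence $sk$ has polynomial growth iff $k'\in A_0$, i.e.\ iff $k\in K$.

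For the forward direction ($sk$ polynomially growing $\Rightarrow k\in K$): from $\alpha^m(sk) = s\cdot w_m$ with $w_m\in H$, polynomial growth of $|\alpha^m(sk)|$ in $G$ forces $|w_m|$ (hence $\|w_m\|_w$ in $H$) to grow polynomially, so $w_m$ — and in particular $k' = w_1 w_0^{-1}$-type differences — lies in a polynomially $\|\cdot\|_w$-growing subgroup of $H$; the cocycle relation $w_{m+1} = $ (twisted translate of $w_m$) $\cdot$ ($\alpha$-image of $k'$) together with the word-length argument of \cref{lem;growth_sbs-1} pins this subgroup down to be $\alpha$-invariant, hence contained in $A_0$, giving $k'\in A_0$. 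For the reverse direction ($k\in K\Rightarrow sk$ polynomially growing): if $k'\in A_0$ then, because $A_0$ is $\alpha$-invariant and uniformly polynomially $|\cdot|$-growing (third bullet of \cref{thm;poly_subgps}), all the iterates $\alpha^i(k')$ stay in $A_0$ with uniformly polynomially bounded word length, so the accumulated product $w_m$ has word length bounded by $\sum_{i<m}(\text{poly}(i)) \le \text{poly}(m)$, whence $|\alpha^m(sk)| \le 1 + |w_m| \le \text{poly}(m)$.

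The main obstacle I anticipate is the bookkeeping of conjugations: the naive iteration $\alpha^m(sk) = (sk)\prod \alpha^i(k')$ hides the fact that each new factor is being conjugated by the running product, so one must either track this carefully in $H$ or — as seems cleaner — argue only about membership in, and $\alpha$-invariance of, polynomially growing subgroups of $H$, exploiting that such membership is a conjugacy-invariant notion and that the relevant subgroups are malnormal, so the accumulated element cannot "escape" the subgroup $A_0$ once it is inside. A second, smaller subtlety is making sure the reduction really uses $|\cdot|$-growth (and not $\|\cdot\|_w$-growth) consistently, since the statement of \cref{lem;type2} is about $|\cdot|$; here the point is that $s$ has infinite order and the normal form gives $|\alpha^m(sk)| \asymp 1 + |w_m|$ with no conjugation ambiguity on the $G$-side, so the $|\cdot|$ statement transfers to a $\|\cdot\|_w$ statement inside $H$ exactly via the argument already deployed in \cref{lem;growth_sbs-1}.
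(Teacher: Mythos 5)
Your overall route is the paper's: from $\alpha(s)=sh^{-1}$ you get the exact identity $\alpha(sk)=sk\,k'$, hence $\alpha^m(sk)=sk\,k'\alpha(k')\cdots\alpha^{m-1}(k')=s\,w_m$ with $w_m\in H$, the lemma reduces to deciding when $|w_m|$ is polynomially bounded, and your reverse implication ($k'\in A_0$ gives polynomial bounds because $A_0$ is preserved by $\alpha$ and word lengths of its elements grow uniformly polynomially) is exactly the paper's. Note that the obstacle you anticipate about hidden conjugations is vacuous: since $\alpha$ is a homomorphism, $\alpha^{m+1}(sk)=\alpha^m(sk)\cdot\alpha^m(k')$, so the product formula is exact and there is no twisting to track.

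The forward direction is where your write-up has a real soft spot. You assert that polynomial growth of $|w_m|$ implies that $w_m$, and in particular $k'$ ``as a difference of $w_0$ and $w_1$'', lies in a polynomially $\|\cdot\|_w$-growing subgroup of $H$. That inference is not valid: for $w_m$ it is false in general (once the lemma is proved one has $w_m\in kA_0$, and $k$ itself need not be polynomially growing), and for $k'$ the membership in a polynomially growing subgroup is essentially the conclusion you are trying to reach; it cannot be read off from the single identity $k'=w_0^{-1}w_1$. The missing step --- which is the paper's, and which follows at once from the cocycle relation you already wrote --- is to use $\alpha^m(k')=w_m^{-1}w_{m+1}$ for \emph{every} $m$, so that $|\alpha^m(k')|\le |w_m|+|w_{m+1}|\le P(m)+P(m+1)$; thus the whole sequence of word lengths $|\alpha^m(k')|$ is polynomially bounded, and the remark following \cref{thm;poly_subgps} (an element whose word lengths grow polynomially lies in an $\alpha$-preserved maximal polynomially growing subgroup, which by the no-twinning property is unique, namely $A_0$) gives $k'\in A_0$, i.e.\ $k\in K$. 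With that one inequality made explicit, your argument coincides with the paper's proof of \cref{lem;type2}.
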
 
 
\begin{proof} Observe that if $k\in H$,  then the image of    $sk$ by  $\alpha$   is $\alpha(sk) =  sh^{-1} \alpha(k) = sk k'   $.  Thus the image of $sk$ by $\alpha^{m}$ is  $sk k' \alpha(k')\dots \alpha^{m-1}(k')$. Since  $k k' \alpha(k')\dots \alpha^{m-1}(k') \in H$, we have a cyclically reduced form in the free product, and therefore $sk$ has polynomial $|\cdot |$-growth if and only if the sequence of word length of the elements $k k' \alpha(k')\dots \alpha^{m-1}(k') \in H$ is bounded above by a polynomial.  If $ k    \in K$, then $k'\in A_0$ and its images in $H$ by $\alpha^m$ grow polynomially in the word metric (because $A_0$ is preserved by $\alpha$). Then $sk$ has polynomial $|\cdot |$-growth. For the converse, assume that there is a polynomial $P$ such that the sequence of elements $k k' \alpha(k')\dots \alpha^m(k')$  has word length bounded by  $P(m)$. Then for $m>1$,  $k k' \alpha(k')\dots \alpha^m(k') \times\alpha^{m+1}(k') $ has, even after cancellation,  word length at least $ |\alpha^{m+1}(k') | -P(m)$, and at most $P(m+1)$.  So we have $P(m+1) +P(m)  \geq |\alpha^{m+1}(k') | $. Thus the word length of  $\alpha^{m}(k')$ grows at most polynomially in $m$, which means that  $k' \in A_0$, and that $k\in K$. 
\end{proof}

In particular, for $k=1$, $s$ has polynomial $\|\cdot\|_w$-growth if and only if $1 \in K$, which holds if and only if $h \in A_0$. 

\begin{lemma}\label{lem;Kcoset} If $K\neq \emptyset$, then it contains exactly one left coset of $A_0$.\end{lemma}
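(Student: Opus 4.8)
The plan is to establish two facts about $K$: that it is a union of left cosets of $A_0$, and that any two of its elements lie in the same such coset; together these give that $K$ is exactly one left coset of $A_0$.

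For the first fact, recall that $A_0$ is preserved by $\alpha$, i.e.\ $\alpha(A_0)=A_0$. If $k\in K$ and $a\in A_0$, then with the notation $k'=k^{-1}h^{-1}\alpha(k)$ one has
\[(ka)'=(ka)^{-1}h^{-1}\alpha(ka)=a^{-1}\,k'\,\alpha(a),\]
which lies in $A_0$ since $a^{-1},k',\alpha(a)\in A_0$. Hence $ka\in K$, so $kA_0\subseteq K$.

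For the second fact, take $k_1,k_2\in K$ and set $g=k_1^{-1}k_2\in H$. As in the proof of \cref{lem;type2}, $\alpha(sk_i)=s\,k_i\,k_i'$, and membership $k_i\in K$ means precisely $k_i'\in A_0$. Since $g=(sk_1)^{-1}(sk_2)$, I compute
\[\alpha(g)=\alpha(sk_1)^{-1}\alpha(sk_2)=(sk_1k_1')^{-1}(sk_2k_2')=(k_1')^{-1}\,g\,k_2',\]
where $p:=(k_1')^{-1}$ and $q:=k_2'$ both lie in $A_0$. Iterating and using $\alpha(A_0)=A_0$ gives
\[\alpha^m(g)=\bigl(\alpha^{m-1}(p)\cdots\alpha(p)\,p\bigr)\,g\,\bigl(q\,\alpha(q)\cdots\alpha^{m-1}(q)\bigr),\]
with both bracketed factors in $A_0$. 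Because $A_0$ is preserved by $\alpha$, the word lengths $|\alpha^j(p)|$ and $|\alpha^j(q)|$ grow at most polynomially in $j$ (third item of \cref{thm;poly_subgps} applied to $H$); hence the word lengths of the two bracketed factors, being bounded by sums of $m$ such terms, grow at most polynomially in $m$, and therefore so does $|\alpha^m(g)|$. Thus $g$ has polynomial $|\cdot|$-growth in $H$ under $\alpha$.

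Finally, by the observation following \cref{thm;poly_subgps} applied to $H$, an element of $H$ whose iterates have polynomially growing word length lies in a polynomially growing subgroup of $H$ preserved by $\alpha$; since $A_0$ is the unique maximal such subgroup, $g=k_1^{-1}k_2\in A_0$, i.e.\ $k_1A_0=k_2A_0$. Combined with the first fact, $K$ is exactly one left coset of $A_0$. The one point requiring care is the polynomial bound on the bracketed factors of $\alpha^m(g)$: the naive recursion $c_{m+1}=\alpha(c_m)\,p$ for the left factor only yields an exponential bound, so one genuinely needs the explicit product expression together with the uniform polynomial growth of the $\alpha$-orbit of an element of the invariant subgroup $A_0$.
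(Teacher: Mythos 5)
Your proof is correct and follows essentially the same route as the paper: the paper simply notes that $k_1^{-1}k_2=(sk_1)^{-1}(sk_2)$, invokes \cref{lem;type2} together with subadditivity of word length to see that $|\alpha^m(k_1^{-1}k_2)|$ grows polynomially, and concludes $k_1^{-1}k_2\in A_0$, which is exactly the content of your explicit product formula for $\alpha^m(g)$. Your additional verification that $kA_0\subseteq K$ (so that $K$ is a full coset) is a harmless supplement that the paper leaves implicit.
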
 \begin{proof} Consider $k_1, k_2 \in K$, the two sequences of word lengths of elements $|\alpha^m (sk_i)|$ grow polynomially, hence this is true also for the sequence $|\alpha^m ((sk_1)^{-1}sk_2)|$. In other words, $|\alpha^m (k_1^{-1}k_2)|$  grow polynomially, and, since $k_1^{-1}k_2 \in H$,   this means that $k_1^{-1}k_2$ is in $A_0$.\end{proof} 

We thus choose $k_0\in K$ if the latter is non-empty, and the group $\langle A_0, \{sk, k\in K\}\rangle$ is equal to $\langle A_0, sk_0\rangle$ (in particular it is finitely generated). We will show below that this subgroup is maximal for polynomial $\|\cdot\|_w$-growth.

Recall that we already considered the images of elements of the form $sbs^{-1}$ in \cref{lem;growth_sbs-1}. We refine this when $K$ is non-empty.
 
\begin{lemma}\label{lem;type1} If $K\neq \emptyset$,  and $b\in H$, the sequence  $(\alpha^m(sbs^{-1}))_m$ is a sequence of elements whose word length in $G$ is bounded by a polynomial, if and only if, for all $k\in K$, the element $a= k^{-1} b k$ is in $A_0$.  \end{lemma}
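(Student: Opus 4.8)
The plan is to reduce the statement to a single identity of subgroups and verify that identity by a direct computation. Write $\psi$ for the automorphism $b\mapsto h^{-1}\alpha(b)h$ of $H$. Since $\alpha(s)=sh^{-1}$ one gets $\alpha^m(sbs^{-1})=s\,\psi^m(b)\,s^{-1}$ for all $m$, so, exactly as in the proof of \cref{lem;growth_sbs-1}, the word length of $\alpha^m(sbs^{-1})$ in $G$ is polynomially bounded if and only if $b$ lies in $B_0$, the maximal polynomially growing subgroup of $H$ invariant under $\psi$ (this is the description of $B_0$ given by \cref{lem;growth_sbs-1}). On the other side, fix $k_0\in K$ and set $k_0'=k_0^{-1}h^{-1}\alpha(k_0)$, which lies in $A_0$ by definition of $K$; equivalently $\alpha(k_0)=hk_0k_0'$. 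By \cref{lem;Kcoset} one has $K=k_0A_0$, so, since $A_0$ is a group and $(k_0a)^{-1}b(k_0a)=a^{-1}(k_0^{-1}bk_0)a$, the condition ``$k^{-1}bk\in A_0$ for all $k\in K$'' is equivalent to the single condition $k_0^{-1}bk_0\in A_0$. Hence the lemma is equivalent to the equality of subgroups $B_0=k_0A_0k_0^{-1}$, and this is what I would prove.

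For the inclusion $k_0A_0k_0^{-1}\subseteq B_0$: substituting $\alpha(k_0)=hk_0k_0'$ yields, for $a\in A_0$,
\[
\psi(k_0ak_0^{-1}) \;=\; k_0\,\bigl(k_0'\,\alpha(a)\,{k_0'}^{-1}\bigr)\,k_0^{-1},
\]
so $k_0A_0k_0^{-1}$ is $\psi$-invariant (as $k_0',\alpha(a)\in A_0$), and iterating gives $\psi^m(k_0ak_0^{-1})=k_0\,(K_m\,\alpha^m(a)\,K_m^{-1})\,k_0^{-1}$ with $K_m=k_0'\alpha(k_0')\cdots\alpha^{m-1}(k_0')$. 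Because $k_0'$ and $a$ lie in $A_0$, which is $\alpha$-invariant, the third bullet of \cref{thm;poly_subgps} gives a single polynomial bounding all the word lengths $|\alpha^j(k_0')|$ and $|\alpha^m(a)|$; hence $|K_m|$, and then $|\psi^m(k_0ak_0^{-1})|$, grow polynomially in $m$. Thus $k_0A_0k_0^{-1}$ is a $\psi$-invariant polynomially growing subgroup of $H$, so it is contained in the maximal such subgroup $B_0$.

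For the reverse inclusion, the inclusion just proved gives $A_0\subseteq k_0^{-1}B_0k_0$, so in particular $k_0'\in k_0^{-1}B_0k_0$. Using $\alpha(b)=h\psi(b)h^{-1}$ together with $\alpha(k_0)=hk_0k_0'$ again, for $\beta\in B_0$ one computes
\[
\alpha(k_0^{-1}\beta k_0)\;=\;{k_0'}^{-1}\,\bigl(k_0^{-1}\,\psi(\beta)\,k_0\bigr)\,k_0',
\]
which lies in $k_0^{-1}B_0k_0$ since $\psi(\beta)\in B_0$ and $k_0'\in k_0^{-1}B_0k_0$; iterating, $\alpha^m(k_0^{-1}\beta k_0)=K_m^{-1}\,(k_0^{-1}\psi^m(\beta)k_0)\,K_m$, whose word length is polynomially bounded because $|\psi^m(\beta)|$ is (as $B_0$ is $\psi$-preserved, by \cref{thm;poly_subgps} applied to $(H,\psi)$) and $|K_m|$ is. So $k_0^{-1}B_0k_0$ is an $\alpha$-invariant polynomially growing subgroup of $H$, hence contained in $A_0$; thus $B_0\subseteq k_0A_0k_0^{-1}$, and $B_0=k_0A_0k_0^{-1}$, which proves the lemma.

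The step I expect to be the main obstacle is keeping the twisting straight: everything works because $k_0'\in A_0$, so its $\alpha$-iterates remain in $A_0$ and the partial products $K_m$ inherit one polynomial word-length bound — without this the displayed identities would merely express $\alpha^m$, resp.\ $\psi^m$, of $k_0^{-1}\beta k_0$, resp.\ $k_0ak_0^{-1}$, as a conjugate by a \emph{moving} element, and give no uniform bound. A secondary point needing care is the appeal to maximality: one must use the uniqueness statement built into \cref{thm;poly_subgps} (applied once to $(H,\alpha)$ and once to $(H,\psi)$, as already done to define $A_0$ and $B_0$) so that the $\alpha$-invariant, resp.\ $\psi$-invariant, polynomially growing subgroups produced above are genuinely forced to sit inside $A_0$, resp.\ $B_0$.
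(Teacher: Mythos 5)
Your argument is correct, and it reorganizes the proof rather than reproducing the paper's. You reduce the statement, via \cref{lem;growth_sbs-1} (polynomial word-length growth of $(\alpha^m(sbs^{-1}))_m$ iff $b\in B_0$) and \cref{lem;Kcoset} (so that the right-hand condition collapses to $k_0^{-1}bk_0\in A_0$), to the single subgroup identity $B_0=k_0A_0k_0^{-1}$, which you then prove by two inclusions using the conjugation identities $\psi(k_0ak_0^{-1})=k_0(k_0'\alpha(a)k_0'^{-1})k_0^{-1}$ and $\alpha(k_0^{-1}\beta k_0)=k_0'^{-1}(k_0^{-1}\psi(\beta)k_0)k_0'$, together with maximality/uniqueness of $A_0$ and $B_0$. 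The paper instead proves the biconditional element-wise: writing $b=kak^{-1}$, it computes $\alpha^m(sbs^{-1})=sk\,K_m\,\alpha^m(a)\,K_m^{-1}\,k^{-1}s^{-1}$ (your $K_m$ is exactly its product $k'\alpha(k')\cdots\alpha^{m-1}(k')$), so that polynomial growth of the left side is equivalent to polynomial word-length growth of $\alpha^m(a)$, hence to $a\in A_0$; the identity $B_0^{k}=A_0$ is only deduced afterwards, as \cref{coro;A_0B_0}. So the core computation is the same, but the packaging differs: your route proves \cref{coro;A_0B_0} en passant (its separate proof becomes redundant), at the cost of relying on \cref{lem;growth_sbs-1} and, twice, on the fact that an invariant polynomially growing subgroup must lie in the unique invariant maximal one. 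That fact is indeed available; it is the malnormality/no-twinning observation following \cref{thm;poly_subgps}, applied to $\alpha$ and to $\psi=\mathrm{ad}_{h^{-1}}\circ\alpha$ (which has the same polynomially growing peripheral structure, being in the same outer class), and since you establish word-length bounds element by element you could shortcut it by quoting the characterization that $|\alpha^m(\cdot)|$ is polynomially bounded iff the element lies in $A_0$ (resp.\ in $B_0$ for $\psi$), already used in the proof of \cref{lem;type2}; spelling out one of these two justifications would close the only step you leave implicit. The paper's element-wise argument avoids this second appeal to uniqueness and is independent of \cref{lem;growth_sbs-1}, while yours makes the structural relation $B_0=k_0A_0k_0^{-1}$ the visible pivot.
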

 
\begin{proof} Consider the element $sbs^{-1}$, and assume that there is $k\in K$.  Write $b= k a k^{-1}$ (of course $a\in H$). Then the image of  $sbs^{-1}$ by   $\alpha^m$ is  \begin{align*} \alpha^m (sbs^{-1} ) =  & sk  \times \left( k' \alpha(k')\dots \alpha^{m-1}(k')\right) \times  \alpha^m(a) \times \\  & \qquad   \times \left(    \alpha^{m-1}((k')^{-1}) \dots  \alpha ((k')^{-1}) (k')^{-1}\right) \times ( k^{-1} s^{-1}).\end{align*}  
 { Since $k\in K$, $k'\in A_0$. We therefore obtain that $\left( k' \alpha(k')\dots \alpha^{m-1}(k')\right)$ and  $\left(    \alpha^{m-1}((k')^{-1}) \dots  \alpha ((k')^{-1}) (k')^{-1}\right)$ define elements whose word lengths have polynomial growth.  Thus the sequence  $(\alpha^m(sbs^{-1}))_m$ has polynomial word length growth if and only if the sequence $\alpha^m(a)$ has. This is characterized by $a\in A_0$.}  \end{proof}

\begin{cor}\label{coro;A_0B_0} If $k\in K \neq \emptyset$, and if $B_0$ is the maximal polynomially growing subgroup fixed by conjugation by $th$ (possibly trivial), then $B_0^k =A_0$. \end{cor}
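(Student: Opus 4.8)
The plan is to deduce \cref{coro;A_0B_0} directly from the element-level growth dichotomies just established, with no new geometric input. The key observation is that, once $K\neq\emptyset$, \cref{lem;growth_sbs-1} and \cref{lem;type1} combine into a single clean criterion: for $b\in H$,
\[
b\in B_0 \iff \ell^{-1}b\ell\in A_0 \text{ for every } \ell\in K .
\]
Indeed, by \cref{lem;growth_sbs-1} membership of $b$ in $B_0$ is equivalent to the polynomial boundedness of the word lengths of $(\alpha^m(sbs^{-1}))_m$, and by \cref{lem;type1} (using $K\neq\emptyset$) the latter is in turn equivalent to $\ell^{-1}b\ell\in A_0$ for all $\ell\in K$. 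I would record this equivalence first, then prove the two inclusions $B_0^k\subseteq A_0$ and $A_0\subseteq B_0^k$ separately, where throughout $X^g$ denotes $g^{-1}Xg$ as in the rest of the paper.

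One inclusion is immediate: fix the element $k\in K$ given in the statement; for any $b\in B_0$ the displayed criterion (specialised to $\ell=k$) gives $k^{-1}bk\in A_0$, hence $B_0^k=k^{-1}B_0k\subseteq A_0$. For the reverse inclusion, take $a\in A_0$ and note that $kak^{-1}\in H$; to show $kak^{-1}\in B_0$ it suffices, by the displayed equivalence, to check that $\ell^{-1}(kak^{-1})\ell\in A_0$ for every $\ell\in K$. Here \cref{lem;Kcoset} enters: since $k,\ell\in K$ we have $k^{-1}\ell\in A_0$, so we may write $\ell=ka_0$ with $a_0\in A_0$, and then
\[
\ell^{-1}(kak^{-1})\ell \;=\; a_0^{-1}k^{-1}\,(kak^{-1})\,ka_0 \;=\; a_0^{-1}aa_0 \;\in\; A_0 ,
\]
because $a,a_0\in A_0$ and $A_0$ is a subgroup. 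Hence $kak^{-1}\in B_0$, i.e. $a\in k^{-1}B_0k=B_0^k$, which gives $A_0\subseteq B_0^k$ and therefore $B_0^k=A_0$.

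The proof is short, so there is no serious obstacle; the only point that needs a little care is bookkeeping with conjugation conventions. One must make sure that \cref{lem;growth_sbs-1} is being read as the biconditional ``$(\alpha^m(sbs^{-1}))_m$ polynomially bounded $\iff b\in B_0$'' rather than only one implication, and that the coset furnished by \cref{lem;Kcoset} is a \emph{left} coset $K=kA_0$ — this is exactly what makes the telescoping $\ell^{-1}(kak^{-1})\ell=a_0^{-1}aa_0$ work, whereas a right coset would not collapse. Everything else is formal manipulation of the already-proven statements.
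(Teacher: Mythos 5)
Your proof is correct, and for the inclusion $B_0^k\subseteq A_0$ it coincides with the paper's (both specialise \cref{lem;type1} to $b\in B_0$, using the forward direction of \cref{lem;growth_sbs-1}). For the reverse inclusion $A_0\subseteq B_0^k$ the route is genuinely different: the paper takes $a\in A_0$, computes the conjugate of $kak^{-1}$ by $th$ directly (using $k\in K$ and $\alpha(A_0)=A_0$) to see that $kA_0k^{-1}$ is a polynomially growing subgroup preserved by conjugation by $th$, and then invokes the \emph{maximality} in the definition of $B_0$ to conclude $kA_0k^{-1}<B_0$. You instead never touch the defining maximality of $B_0$ or do any computation with $\alpha$ and $h$: you fuse the two stated biconditionals of \cref{lem;growth_sbs-1} and \cref{lem;type1} into the criterion $b\in B_0\iff \ell^{-1}b\ell\in A_0$ for all $\ell\in K$, and verify it for $b=kak^{-1}$ by the telescoping $\ell^{-1}(kak^{-1})\ell=a_0^{-1}aa_0$, where $\ell=ka_0$ comes from \cref{lem;Kcoset}. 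Your version is more formal and arguably more economical, since everything is deduced from already-stated equivalences (the $\alpha$-invariance of $A_0$ is only used implicitly, inside those lemmas); the paper's version has the side benefit of exhibiting explicitly that $kA_0k^{-1}$ is normalized by $th$, which is the structural fact reused later in the relative hyperbolicity argument. The only care needed in your argument is exactly where you flag it: you use the full biconditional of \cref{lem;growth_sbs-1}, and from \cref{lem;Kcoset} you only need that any two elements of $K$ differ by a right factor in $A_0$ (i.e. $K\subseteq kA_0$), which is precisely what its proof establishes; with the paper's convention $X^g=g^{-1}Xg$ the bookkeeping is consistent throughout.
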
 
\begin{proof} \cref{lem;growth_sbs-1} ensures that $B_0^{s^{-1}}$ is preserved by  conjugation by $t$. We may apply the previous lemma for $b\in B_0$. One obtains that $k^{-1} bk \in A_0$. Conversely, take an element $a$ in $A_0$, and consider $ ka k^{-1} $, and apply the conjugation by $th$. Using that $k\in K$, one obtains that this is an element of $kA_0k^{-1}$.  Therefore $ kA_0 k^{-1} <B_0 $, hence the equality. \end{proof}

Consider, more generally, when $g$ is not a power of $s$, the expression of $g$ as normal form \[ g = s^{\epsilon_1} a_1 s^{\epsilon_2} a_2 \dots s^{\epsilon_r} a_r   \]
with $\epsilon_i = \pm 1$ and $a_i \in H$ (possibly trivial).  Define syllables as the following four possibilities: $sas^{-1}, sa, as^{-1}, a $ (of type 1,2,3,4 respectively) .    Observe that $g$ has a unique decomposition into syllables with the condition that   
\begin{itemize}
    \item a syllable of type 1 is followed only by type 3 and 4,
\item a syllable of type 2 is followed only by type 1 and 2,
\item a syllable of type 3 is followed only by type 3 and 4,
\item a syllable of type 4 is followed only by type 1 and 2.
\end{itemize}

We now define admissible syllables. A type 1 syllable is admissible if $a\in B_0$, a type 2 is if $a\in kA_0$, a type 3 is if  $a\in A_0k^{-1}$, and a type 4 is if $a\in A_0$.

Observe that so far, by \cref{lem;type2} and \cref{lem;type1},  we have established the following.
\begin{lemma}\label{lem;syllable}
 A syllable is $|\cdot |$-polynomially growing if and only if it is admissible.
\end{lemma}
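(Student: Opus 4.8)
The statement is an assembly of the membership criteria already obtained, one for each of the four syllable types, so the plan is simply to run through the types in turn and read off the equivalence ``polynomially growing $\iff$ admissible'' from the appropriate earlier lemma.

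For a type $1$ syllable $sbs^{-1}$ this is immediate from \cref{lem;growth_sbs-1}: that lemma says precisely that $(\alpha^m(sbs^{-1}))_m$ has polynomially bounded word length if and only if $b$ lies in the maximal polynomially growing subgroup $B_0$ of $H$ fixed by conjugation by $th$, which is exactly the definition of admissibility for type $1$. (\cref{lem;type1} together with \cref{coro;A_0B_0} gives the same conclusion in the alternative form ``$k^{-1}bk\in A_0$ for $k\in K$'', but one does not need it here.) For a type $2$ syllable $sa$, \cref{lem;type2} says $sa$ has polynomial $|\cdot|$-growth if and only if $a\in K$; and \cref{lem;Kcoset} says that when $K$ is nonempty it equals the single left coset $kA_0$ for any $k\in K$, while when $K$ is empty both the growth condition and admissibility fail vacuously. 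Hence for type $2$: polynomially growing $\iff a\in kA_0\iff$ admissible.

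The type $3$ case $as^{-1}$ is reduced to type $2$ by passing to inverses: word length is inversion-invariant, so $|\alpha^n(g)|=|\alpha^n(g^{-1})|$ for all $n$ and $g$ has polynomial $|\cdot|$-growth if and only if $g^{-1}$ does; applying this with $g=as^{-1}$, $g^{-1}=sa^{-1}$, and using that $A_0$ is a subgroup, $as^{-1}$ is polynomially growing $\iff a^{-1}\in kA_0\iff a\in A_0k^{-1}$, which is admissibility for type $3$. Finally, for a type $4$ syllable $a\in H$ one needs the (not previously isolated) fact that $a$ has polynomial $|\cdot|$-growth under $\alpha$ if and only if $a\in A_0$. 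Here one uses that $H$, being a free factor of $G$, is quasiconvex, so the word length in $G$ of an element of $H$ is comparable to its word length in $H$; since $\alpha(H)=H$, polynomial $|\cdot|$-growth of $a$ in $G$ is equivalent to polynomial word-length growth of $a$ in $H$ under $\alpha|_H$. The remark following \cref{thm;poly_subgps}, applied to $H$ with $\alpha|_H$ --- namely that an element with polynomially growing word length lies in a polynomially growing subgroup preserved by the automorphism, together with the uniqueness of such a subgroup, which is $A_0$ --- gives one direction, and the third bullet of \cref{thm;poly_subgps} applied to $H$ gives the converse. Assembling the four cases proves the lemma.

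I do not anticipate a genuine obstacle. The only points requiring a little care are getting the sidedness of cosets right in the type $3$ inversion argument (the left coset $kA_0$ attached to $sa^{-1}$ turns into the right coset $A_0k^{-1}$ attached to $as^{-1}$), supplying the short argument for type $4$ --- the one syllable type not covered verbatim by a preceding lemma, hence needing the quasiconvexity of $H$ in $G$ to compare word metrics and re-invoking the characterization of $A_0$ inside $H$ --- and noting that in the degenerate case $K=\emptyset$ no type $2$ or type $3$ syllable is admissible, consistently with \cref{lem;type2}.
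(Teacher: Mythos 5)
Your proof is correct and follows essentially the same route as the paper, which simply records \cref{lem;syllable} as an assembly of the preceding membership lemmas (\cref{lem;growth_sbs-1}, \cref{lem;type2}, \cref{lem;Kcoset}, \cref{lem;type1}). The only difference is that you spell out the two cases the paper leaves implicit --- type $3$ via inversion-invariance of word length and type $4$ via the characterization of $A_0$ through \cref{thm;poly_subgps} and the remark following it --- and both of these details are handled correctly.
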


\begin{lemma}
Let $g = s a_1 s^{\epsilon_2} a_2 \dots s^{\epsilon_r} a_r$, with the expression being reduced, $a_i \in H$, $a_r \neq 1$ and $\epsilon_i = \pm 1$. Then $g$ has polynomial $|\cdot|$-growth if and only if all its syllables are admissible.   This is also equivalent to the membership $g\in \langle A_0, sk_0, B_0^{s^{-1}}\rangle$ if $K\neq \emptyset$, and  $g\in \langle A_0, B_0^{s^{-1}}\rangle$ if $K=\emptyset$. 
\end{lemma}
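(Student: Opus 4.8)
The plan is to read off the polynomial growth of $g$ from that of its individual syllables, using that $\alpha$ respects the syllable decomposition and that in the free product $G=H*\langle s\rangle$ the reduced HNN normal form spells a geodesic; the identification with the named subgroup then follows by bookkeeping with \cref{coro;A_0B_0} and \cref{lem;Kcoset}.

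\emph{Step 1: $\alpha$ preserves the syllable structure.} Since $\alpha(H)=H$ and $\alpha(s)=sh^{-1}$ (hence $\alpha(s^{-1})=hs^{-1}$), $\alpha$ carries a syllable of type $1,2,3,4$ to a syllable of the same type: $sas^{-1}\mapsto s\big(h^{-1}\alpha(a)h\big)s^{-1}$, $sa\mapsto s\big(h^{-1}\alpha(a)\big)$, $as^{-1}\mapsto \big(\alpha(a)h\big)s^{-1}$, and $a\mapsto\alpha(a)$. The adjacency rules that pin down the syllable decomposition (the ones stated before admissibility is introduced) are formulated purely in terms of types, so the word $\alpha^m(\sigma_1)\cdots\alpha^m(\sigma_r)$ obeys them for every $m$; as $G=H*_{\{1\}}$ is an HNN extension over the trivial subgroup, a word obeying these rules is reduced by Britton's lemma — the $H$-parts that must be non-trivial for reducedness stay non-trivial because $\alpha$ is injective. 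Hence $\alpha^m(\sigma_1)\cdots\alpha^m(\sigma_r)$ is the reduced normal form of $\alpha^m(g)$, and its $i$-th $H$-part is exactly the $H$-part of $\alpha^m(\sigma_i)$.

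\emph{Step 2: splitting the growth.} With a generating set of $G$ that is the union of one for $H$ and $\{s\}$, a reduced normal form is geodesic, so $|\alpha^m(g)|$ is comparable, uniformly in $m$, to $\sum_{i=1}^{r}|\alpha^m(\sigma_i)|$; in particular $|\alpha^m(g)|\ge |\alpha^m(\sigma_i)|-2$ for every $i$. Thus $(|\alpha^m(g)|)_m$ is polynomially bounded if and only if $(|\alpha^m(\sigma_i)|)_m$ is polynomially bounded for each $i$, which by \cref{lem;syllable} holds if and only if every syllable $\sigma_i$ is admissible. This gives the first equivalence.

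\emph{Step 3: the subgroup.} Assume first $K\neq\emptyset$ and set $P=\langle A_0,sk_0\rangle$. By \cref{coro;A_0B_0}, $B_0=k_0A_0k_0^{-1}$, so $B_0^{s^{-1}}=sB_0s^{-1}=(sk_0)A_0(sk_0)^{-1}\le P$, whence $P=\langle A_0,sk_0,B_0^{s^{-1}}\rangle$. If all syllables of $g$ are admissible, each lies in $P$: a type-$4$ syllable lies in $A_0$; a type-$1$ syllable $sas^{-1}$ with $a\in B_0$ lies in $B_0^{s^{-1}}$; using $K=k_0A_0$ (\cref{lem;Kcoset}), a type-$2$ syllable $sa$ with $a\in k_0A_0$ is $(sk_0)(k_0^{-1}a)$ with $k_0^{-1}a\in A_0$, and a type-$3$ syllable $as^{-1}$ with $a\in A_0k_0^{-1}$ is $(ak_0)(sk_0)^{-1}$ with $ak_0\in A_0$; so $g\in P$. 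Conversely, $A_0$ and $(sk_0)^{\pm1}$ are each (a product of boundedly many) admissible syllables, and the set of elements all of whose syllables are admissible is closed under multiplication: concatenating two such words and reducing, the only Britton cancellations that can occur turn an admissible type-$2$ followed by an admissible type-$3$ into an admissible type-$1$, an admissible type-$1$ followed by an admissible type-$2$ (or another type-$1$) into an admissible type-$2$ (resp. type-$1$), an admissible type-$3$ followed by an admissible type-$2$ into an admissible type-$4$, and amalgamate consecutive admissible type-$4$'s — in each case one checks the membership against the definitions using $k_0A_0k_0^{-1}=B_0$ and $k_0^{-1}k_0=1$, and no inadmissible syllable is ever produced. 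Hence $g\in P$ exactly when all its syllables are admissible. When $K=\emptyset$, admissibility forbids syllables of types $2$ and $3$ (\cref{lem;type2}), so $g$'s syllables are of type $1$ ($H$-part in $B_0$) or type $4$ ($H$-part in $A_0$), and the same closure argument identifies $\langle A_0,B_0^{s^{-1}}\rangle$ with the set of such elements.

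\emph{Main obstacle.} The delicate part is Step 1: checking that iterating $\alpha$ never introduces cancellation between syllables, so that the length of $\alpha^m(g)$ genuinely decomposes along the syllables of $g$ (and in particular that the $H$-parts which reducedness needs to be non-trivial remain so). Once this is in place, both equivalences are routine combinatorics built on \cref{lem;syllable}, \cref{lem;Kcoset} and \cref{coro;A_0B_0}.
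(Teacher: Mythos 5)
Your proposal is correct and takes essentially the same route as the paper: the paper's proof is precisely the observation that $\alpha$ carries each syllable to a syllable of the same type with no cancellation between syllables (normal forms in $H*\langle s\rangle$ are preserved), so \cref{lem;syllable} yields the growth--admissibility equivalence, which is exactly what your Steps 1--2 spell out in detail. The membership equivalence, which the paper treats as routine, you verify by hand; the converse direction could be shortened by noting that word length is subadditive under multiplication, so the subgroup generated by the polynomially growing sets $A_0$, $sk_0$ and $B_0^{s^{-1}}$ consists of polynomially growing elements, and then the first equivalence applies.
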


Note that any $g \in G\setminus H$ can be written in the above form after conjugating and possibly taking $g^{-1}$, provided that $g$ is not a power of $s$.

\begin{proof}
$\alpha$ takes any syllable to a syllable of same type. Therefore by \cref{lem;syllable},   all syllables are admissible if and only if  the element $g$ is polynomially growing in word length. This proves the lemma. 
\end{proof}
 
We thus obtained that the collection of maximal subgroups of polynomial $\|\cdot \|_w$-growth of $H*\langle s\rangle$  are 
\begin{itemize}
    \item the conjugates of those of $H$, except the conjugates of $A_0$, and either
    \item the conjugates of  $A_0 * B_0^{s^{-1}}$, if  the set  $K$ of  solutions of $ k^{-1} h^{-1} \alpha(k) \in A_0 $ is  $K=\emptyset$, or
    \item the conjugates of $\langle A_0, \{sk, k\in K\}\rangle$, if $K\neq \emptyset$ (observe that by \cref{coro;A_0B_0}, this subgroup contains $B_0^{s^{-1}}$).
\end{itemize}

It is clear that only $A_0 * B_0^{s^{-1}}$ or $\langle A_0, \{sk, k\in K\}rangle$, as the case may be, among them, is preserved by $\alpha$. 

We argue toward quasiconvexity and malnormality of  $\langle A_0, sk\rangle$, when $k\in K$ (the case of $A_0* B_0^{s^{-1}}$ being similar).    The group $\langle A_0, sk\rangle$ is the free product of $A_0$ by $\langle sk \rangle $, and by free construction it is relatively quasiconvex with respect to $H$ (for instance, use the definition of Mart\'{i}nez-Pedroza and Wise \cite{MPW} in the Serre tree of the free product of $H$ by $\langle s\rangle$). Since its intersection with $H$ is quasiconvex in $H$, it is globally quasiconvex (this can be seen, for example, from \cite[Def. QC-5]{hruska}).  We now argue toward malnormality of  $\langle A_0, sk\rangle$. If two conjugates $\langle A_0, sk \rangle$ and $\gamma\langle A_0,sk\rangle \gamma^{-1}$ intersect non-trivially, a first case is when the intersection contains a non-trivial elliptic element.  So in this case, we can assume it is in $A_0$. Call it $a_0$.  So $a_0 = \gamma b \gamma^{-1}$ for $b$ in $\langle A_0,sk \rangle$. Since $b$ is an elliptic element, write $b = \eta a_1 \eta^{-1} $ with  $\eta \in \langle A_0,sk \rangle$, and $a_1\in A_0$. By malnormality of $H$ in the free product,  the element $\gamma \eta$ is in $H$.  By malnormality of $A_0$ in $H$, the element $\gamma \eta$ is in $A_0$. Since $\eta \in \langle A_0,sk \rangle$, $\gamma$, which is $ \gamma \eta \eta^{-1}$ also has to be in  $\langle A_0,sk\rangle$. The second and last case, is when the intersection of  $ \langle A_0,sk \rangle$ and $\gamma\langle A_0,sk\rangle\gamma^{-1}$ contains an element that is hyperbolic in the free product.  Consider $ \ell_1 = \gamma \ell_2 \gamma^{-1} $ with $\ell_1, \ell_2$ in  $\langle A_0,sk\rangle$, hyperbolic elements. Let $L_1, L_2$ be their axes in the Serre tree of $H*\langle s \rangle$. Up to conjugating $\ell_1$ and $\ell_2$ in $\langle A_0,sk\rangle$, we may assume that both the axes pass through the vertex fixed by $A_0$, and that $\gamma$ fixes this vertex. Since there is a unique $A_0$-orbit of edges adjacent to this vertex in the minimal subtree of $\langle A_0,sk \rangle$, we may assume that the two axes   share a same edge about the vertex fixed by $H$, and that $\gamma$ fixes this edge. But edge stabilizers are trivial, hence, up to multiplication by elements of $\langle A_0,sk\rangle$,  $\gamma$ is trivial, which is what we wanted.

The other properties are easily obtained, and this finishes the proof of \cref{thm;poly_subgps}.

\subsection{Suspensions of the polynomially growing subgroups}

Since the collection of maximal $\|\cdot \|$-polynomially growing subgroups is $\alpha$-invariant, and is finite, we may find $k\geq 1$ such that $\alpha^k$ preserves the conjugacy class of each of them. Specifically, if $Q$ is a maximal polynomial $\|\cdot \|$-growing subgroup of $G$, let $k$ be the smallest positive integer for which there exists $g_q$ satisfying $\alpha^k (Q) = g_q^{-1}Qg_q$. Then the suspension of $Q$ in $G\rtimes_\alpha \langle t\rangle$ is the group $\langle Q, t^kg_q^{-1}\rangle$, and it is isomorphic to $Q \rtimes_{ad_{g_q} \circ \alpha^k} \langle t'\rangle$ for $t'=t^kg_q^{-1}$. 

\section{Semi-direct products}
\subsection{Setting and statement}
 
This section is for proving \cref{thm;main}.
 
\begin{theorem}\label{thm;main} If $G$ is a torsion-free hyperbolic group, and $\alpha$ is an automorphism of $G$, then $G\rtimes_\alpha \mathbb{Z}$ is relatively hyperbolic with respect to the suspensions of the polynomially growing subgroups for $\alpha$.
\end{theorem}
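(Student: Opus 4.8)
The plan is to run the same induction on the Kurosh rank of $G$ that structured the proof of \cref{thm;poly_subgps}, but now producing the relatively hyperbolic structure rather than merely the list of parabolics. The organizing principle throughout is the combination theorem of \cite{dahmani_combination}: whenever $G\rtimes_\alpha\bbZ$ splits (as an amalgam or HNN extension) over a subgroup of the form $Q\rtimes\bbZ$, and the vertex/edge groups are already known to be relatively hyperbolic with a compatible peripheral structure, one concludes relative hyperbolicity of the total group, extending the parabolic family if an attaching map fails to be maximal parabolic. So the bulk of the argument is the bookkeeping of \emph{which} parabolics appear at each inductive step and \emph{why} the attaching maps embed as described. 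First I would handle the base case: $G$ one-ended (Kurosh rank $1$, non-cyclic case). Here, as the excerpt indicates, this is ``essentially already known'' via the JSJ/pA-tree of \cref{sec;pA}: after passing to a power of $\alpha$, the tree $T_{pA}$ suspends to a graph-of-groups decomposition of $G\rtimes_{\alpha^k}\bbZ$ whose pA-vertex suspensions are hyperbolic (mapping tori of pseudo-Anosov-on-a-subsurface pieces, hence hyperbolic relative to their peripheral tori), whose non-pA vertex suspensions are the suspensions of the maximal polynomially growing subgroups (which are exactly the non-pA vertex stabilizers, by the Kurosh-rank-$1$ analysis in the proof of \cref{thm;poly_subgps}), and whose edge suspensions are $\bbZ^2$'s that are maximal parabolic on the hyperbolic side; iterated application of \cite{dahmani_combination} along this finite graph gives the conclusion, and one then descends from $\alpha^k$ to $\alpha$ by the standard observation that relative hyperbolicity passes between a group and a finite-index overgroup with the induced peripheral structure.

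Next comes the full-irreducibility step, which is the workhorse of the induction. Let $\calH$ be a maximal proper $\alpha$-invariant free factor system with respect to which $\alpha$ is fully irreducible (\cref{sec;ffs_fi}), and $T$ an associated Bass--Serre tree. If $T$ has at least two orbits of edges, then by \cite[Prop 1.13]{dahmani_li} the maximal polynomially $\|\cdot\|_T$-growing subgroups form a finite-up-to-conjugacy malnormal family of hyperbolic subgroups of strictly lower Kurosh rank, to each of which the inductive hypothesis applies, giving that each suspension is relatively hyperbolic with respect to the suspensions of its maximal $\|\cdot\|_w$-polynomially growing subgroups; one then feeds these into the combination theorem along the suspended graph-of-groups $G\backslash T$ (whose other vertex groups are the factors in $\calH$, themselves handled by induction, and whose edge groups are trivial so their suspensions are cyclic and unconditionally parabolic-compatible). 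The output is relative hyperbolicity of $G\rtimes_\alpha\bbZ$ with respect to exactly the suspensions of the $\|\cdot\|_w$-polynomially growing subgroups, using that $\|\cdot\|_w$-polynomial growth implies $\|\cdot\|_T$-polynomial growth so no parabolic is lost and the collections match up.

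The remaining two cases — $G=H_1*H_2$ and $G=H*_{\{1\}}$ with a single edge orbit — are the delicate low-complexity cases, and I expect the HNN case to be the main obstacle. For $G=H_1*H_2$, after passing to $\alpha^2$ and post-conjugating so that $\alpha$ preserves both $H_i$ (exactly as in \cref{sec;free_product}), the suspension $G\rtimes_\alpha\bbZ$ is an amalgam of $H_i\rtimes_\alpha\bbZ$ over $\bbZ$ (the suspension of the trivial edge group), and the combination theorem applies; the only point to check is that the new parabolic $A*B$ (the unique twinned pair, $A<H_1$, $B<H_2$, both $\alpha$-preserved) has a suspension that is genuinely a free product $A*B$ suspended, which is handled by the explicit description in \cref{thm;poly_subgps} and an elementary Bass--Serre argument. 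For $G=H*\langle s\rangle$, the subtlety is that $\alpha$ preserves $H$ but sends $s\mapsto sh^{-1}$, so the suspension does \emph{not} split as a clean HNN extension over $H\rtimes\bbZ$; instead one must use the element $t'=th$ (or $t^kh^{-1}$ in the notation of Section 3.3) to realize an honest $\bbZ$-by-$H\rtimes\bbZ$ structure, and then identify the new maximal parabolic — which is $\langle A_0, sk_0\rangle$ when $K\neq\emptyset$ (a free product $A_0*\langle sk_0\rangle$, whose suspension is governed by \cref{lem;type2,lem;Kcoset}) and $A_0*B_0^{s^{-1}}$ when $K=\emptyset$ — and verify, via \cref{coro;A_0B_0} and the syllable analysis, that its suspension is parabolic-compatible with the $H\rtimes_\alpha\bbZ$ side so that the combination theorem can absorb it. The hard part will be ensuring that in this HNN case the attaching data (in particular the behavior of $B_0^{s^{-1}}$, which is $t$-normalized by \cref{lem;growth_sbs-1}) is set up so that exactly one copy of the suspended parabolic is glued in, with no extra identifications, so that the resulting parabolic family is precisely the suspensions of the maximal $\|\cdot\|_w$-polynomially growing subgroups and nothing more — i.e. sharpness, not just relative hyperbolicity, survives the low-complexity steps.
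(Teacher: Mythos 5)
Your overall skeleton (induction on Kurosh rank, the pA-tree argument in the one-ended base case, and the separate treatment of the two one-edge-orbit splittings $H_1*H_2$ and $H*\langle s\rangle$) matches the paper's, but there is a genuine gap at the heart of your full-irreducibility step. When $T$ has at least two orbits of edges you propose to ``feed'' the inductively-understood suspensions ``into the combination theorem along the suspended graph-of-groups $G\backslash T$''. No such suspended decomposition exists: $\alpha$ is only realized on $T$ by a train track map with stretch factor strictly larger than $1$ (Francaviglia--Martino), not by an isometry, so the $G$-action on $T$ does not extend to $G\rtimes_\alpha\bbZ$ and the mapping torus does not split as a graph of groups with vertex groups the suspensions of the factors in $\calH_m$ --- if it did, the theorems of Gautero--Lustig, Ghosh and \cite{dahmani_li} would be reduced to a routine induction. (Moreover the maximal polynomially $\|\cdot\|_T$-growing subgroups are in general not vertex stabilizers of $T$; they are built from elliptic pieces together with polynomially growing hyperbolic elements, so they would not even appear as vertex groups of the decomposition you invoke.) What is actually needed here, and what the paper uses, are two ingredients your plan omits: first, the main relative hyperbolicity theorem of \cite{dahmani_li}, which says that $G\rtimes_\alpha\bbZ$ is hyperbolic relative to the suspensions of the maximal polynomially $\calH_m$-growing subgroups --- this is the genuinely ``exponential'' input, of Brinkmann type, and cannot be replaced by \cite{dahmani_combination}; second, the telescopic argument of \cref{sec;telescopic} (Osin's Theorem 2.40 adapted to relative Dehn functions, or the Drutu--Sapir asymptotic cone characterization), which lets you refine each of these parabolics by the peripheral structure provided by the induction hypothesis. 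With those two steps restored, the large Scott complexity case goes through; \cref{lem;some_exp} is what guarantees $G$ itself is not $\calH_m$-polynomially growing, so that \cite[Prop.\ 1.13]{dahmani_li} applies and the Kurosh rank genuinely drops.

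The remaining parts of your plan are in the spirit of the paper but underestimate the bookkeeping in the low-complexity cases. In both the amalgam and the HNN case the paper does not merely ``extend the parabolic family'': when $t$ (resp.\ $t$ and $th$) is parabolic but not maximal parabolic on both sides, one must re-decompose the group --- e.g.\ as $\bigl((H_1\rtimes \langle t_1 \rangle )\Asterisk_{P_1} [ P_1 *_{t_1=t_2} P_2] \bigr) \Asterisk_{P_2} (H_2\rtimes \langle t_2 \rangle )$ in the amalgam case, and with the auxiliary copy $\dot B_0$ in the four-case analysis of the HNN case --- so that every attaching map of \cite{dahmani_combination} becomes maximal parabolic, and then recognize the new peripheral subgroup as the suspension of $A*B$, of $A_0*B_0^{s^{-1}}$, or of $\langle A_0, sk_0\rangle$ via \cref{lem;growth_sbs-1,lem;type2,lem;Kcoset,coro;A_0B_0}. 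Your sketch correctly identifies these target parabolics, but the re-decomposition step is the actual content there and should be spelled out rather than deferred to ``an elementary Bass--Serre argument''.
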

 
In the following, $G$ is a torsion-free hyperbolic group, and $\alpha$ is an automorphism. Let $k\geq 1$ be an integer. Since $G\rtimes_{\alpha^k} \bbZ$ embeds as a subgroup of index $k$ in the group $G\rtimes_{\alpha} \bbZ$, and as relative hyperbolicity is preserved by passing to and from a finite index subgroup (see \cite[Theorem 5.7]{drutu} or \cref{thm;qi_invariance_drutu}), we will freely use a power of $\alpha$ when needed.

\subsubsection{The telescopic argument}\label{sec;telescopic}
 
Before getting into the proof, recall that by adapting a result of Osin \cite[Theorem 2.40]{osin} (the adaptation required is to substitute ordinary Dehn functions in the statement by relative Dehn functions), we have that if a group $G$ is relatively hyperbolic with respect to the conjugates of subgroups $P_i$, and if each $P_i$ is relatively hyperbolic with respect to conjugates of subgroups $Q_{i,j}$, then $G$ is relatively hyperbolic with respect to conjugates of the $Q_{i,j}$. We will use it together with the induction hypothesis. We note that the above result is also obtained by using the asymptotic cone characterization of Drutu-Sapir \cite{drutu_sapir}.

 \subsubsection{Induction on the Kurosh rank}
We aim to prove \cref{thm;main}. We proceed by an overall induction on the Kurosh rank of $G$, for the Grushko free factor system.

We first prove the result if $G$ is of Kurosh rank $1$. In that case $G$ is either cyclic (and there is nothing to show), or freely indecomposable and torsion-free, and thus one-ended. 
Then, we will treat the case of larger Kurosh rank, for the Grushko free factor system. We pass the problem from $G$ to polynomially growing subgroups in the metric of a tree for which $\alpha$ is fully irreducible.   If this latter tree has more than $2$ orbits of edges, we will use the telescopic argument, together with \cite[Proposition 1.13]{dahmani_li}. If it has only one orbit of edges, actually the telescopic argument does not allow to use induction since $G$ itself is polynomially growing in the metric of the tree, and we treat this case separately by analysing the structure closely.  The latter case also includes Kurosh rank $2$, except when $G$ is a free group of rank $2$.
 
\subsection{The one-ended case}
We first treat the one-ended case. This case was probably known by folklore, and appears in \cite{Gautero_Lustig_one-ended}. We briefly propose a way to cover it.
 
We use the pA-tree $T_{pA}$ for $G$ and $\alpha$ (see \cref{sec;pA}).  The group $G\rtimes_{\alpha^k} \mathbb{Z}$  acts on $T_{pA}$, and is thus decomposed as a graph of groups, with abelian edge groups, and vertex groups that are suspensions of the vertex stabilizers in $G$ by the induced automorphism. The suspensions of the pA-vertex groups are hyperbolic relative to the free abelian groups of rank $2$ corresponding to the suspensions of their boundary components. The acylindrical combination theorem \cite{dahmani_combination} (\cref{thm;combination}) allows to obtain the relative hyperbolicity of $G\rtimes_{\alpha^k} \mathbb{Z}$ with respect to the groups obtained as graphs-of-groups from the connected components of $G\backslash (T_{pA} \setminus V_{pA}) $. Those are easily seen to be polynomially growing subgroups. This proves the result in this case, for the automorphism $\alpha^k$. Since $G\rtimes_{\alpha^k} \mathbb{Z}$ is of finite index in $G\rtimes_{\alpha} \mathbb{Z}$, we also have the result for the latter, by \cref{prop;fi_polynomial_growth}.
 
We proved the result for the Kurosh rank equal to $1$, we may proceed and assume that it holds for all Kurosh ranks less than that of $G$.

\subsection{The general case of full irreducibility}\label{sec;general_case}

\subsubsection{Train tracks for $\alpha$}

Recall that, if $G$ is not one-ended, it admits a proper free factor system $\calH_m$ that is $\alpha$-invariant, and for which  $\alpha$ is fully irreducible, see the discussion  in \cref{sec;ffs_fi}.  

Recall that if $T$ is a $G$-tree in which the elliptic subgroups form $\calH_m$, a map $f:T\to T$ realizes $\alpha$ if for every vertex $v\in T$, and all $g\in G$, $f(gv)= \alpha(g)f(v)$. Such a map defines equivalence classes on the link of each vertex: two edges $e_1, e_2$ starting at $v$ are equivalent if $f(e_1)$ and $f(e_2)$ have a common initial edge. A turn is a pair of edges sharing a vertex. A turn is legal if the edges are in different equivalence classes. A path is legal if it contains only legal turns. The map $f$ is a train track map if it sends edges on legal paths, and legal turns on legal turns.
 
Francaviglia and Martino construct in \cite{francaviglia_martino} a $(G,\calH_m)$-tree $T$ whose elliptic subgroups are exactly the groups in $\calH_m$, and a map $f:T\to T$ realizing $\alpha$ and that is a train track map.

Moreover, by choosing correctly the metric on $T$, one can show that $f$ stretches every edge of $T$ by the same factor \cite[Lemma 8.16]{francaviglia_martino}, that this factor is stricly larger than $1$ if $T$ contains at least two $G$-orbits of edges \cite[Lemma 1.11]{dahmani_li}, and that, at each vertex of $T$, there is at least one legal turn (see \cite[Remark 6.5]{francaviglia_martino} or the comment before \cite[Definition 8.10]{francaviglia_martino}).
 
\begin{lemma}\label{lem;some_exp} Assume that $T$ has at least two $G$-orbits of edges, then there is at least one element in $G$ that has exponential $\calH_m$-growth. \end{lemma}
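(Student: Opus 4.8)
The plan is to exploit the train track structure directly. Since $T$ has at least two $G$-orbits of edges, the stretch factor $\lambda$ of the train track map $f:T\to T$ realizing $\alpha$ is strictly larger than $1$, and at every vertex of $T$ there is at least one legal turn. The idea is to build a legal closed path in the quotient graph-of-groups $G\backslash T$ (equivalently, a legal axis for some loxodromic $g\in G$), because the $f$-images of a legal path stay legal and hence reduced, so their lengths are multiplied by $\lambda$ at each application of $f$; this forces $\|\alpha^n(g)\|_T\asymp \lambda^n$, i.e. exponential $\calH_m$-growth of $g$.

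First I would pass to a power of $\alpha$ so that $f$ fixes (up to the $G$-action) each vertex of the finite quotient graph and each Nielsen-type identification is controlled; this does not affect the conclusion since $g$ has exponential growth under $\alpha$ iff it does under a positive power. Next, starting from any vertex $v_0$ in $T$ with a legal turn $(e_1,e_2)$, I would extend $e_2$ to a legal edge-path: at the far endpoint of $e_2$ there is again at least one legal turn, so one can always prolong the path legally. Because $G\backslash T$ is a finite graph, such a legal path of sufficient length must revisit a vertex orbit, and with a little care — choosing the continuation at the repeated vertex to realign with the incoming direction via an element of that vertex's stabilizer, using that there is a legal turn available there — one closes up the path into the axis of a loxodromic element $g\in G$ whose fundamental domain in $T$ is a legal segment. (One must check the closed-up turn is itself legal; this is where the "at least one legal turn at each vertex" hypothesis is used, possibly after composing with a suitable vertex-group element and after passing to a further power so the vertex group elements involved behave compatibly with $f$.)

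Once such a $g$ is in hand, the argument concludes quickly: $f$ sends legal paths to legal paths, and a legal path in $T$ is geodesic (no backtracking), so $\ell_T(f^n(\tilde p)) = \lambda^n \ell_T(\tilde p)$ for the axis segment $\tilde p$; since $f^n$ realizes $\alpha^n$, this gives $\|\alpha^n(g)\|_T \geq \lambda^n \|g\|_T - C$ for a constant $C$ absorbing the discrepancy between translation length and the length of a chosen fundamental domain, and as $\lambda>1$ this is exponential, so $g$ does not have polynomial $\calH_m$-growth. By definition, then, $g$ has exponential $\calH_m$-growth.

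I expect the main obstacle to be the closing-up step: producing a genuinely \emph{legal} closed loop (a periodic legal line) rather than merely a long legal segment. The subtlety is that when the legal path returns to a previously visited vertex orbit, the naive concatenation may create an illegal turn at the seam, and one needs the freedom of the vertex stabilizer together with the guaranteed existence of a legal turn there — and possibly a further passage to a power of $\alpha$ — to repair it. This is a standard feature of train track arguments (existence of legal axes / "legal laminations"), so I would cite the relevant construction in \cite{francaviglia_martino} or \cite{dahmani_li} rather than reprove it in full, reducing the lemma to the elementary length-multiplication computation above.
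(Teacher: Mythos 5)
Your strategy is the same as the paper's: use the train track map $f$ with stretch factor $\lambda>1$ and the fact that every vertex of $T$ carries a legal turn (equivalently at least two gates, so any incoming edge can be legally prolonged) to grow a long legal edge path, close it up into a periodic legal line for some loxodromic $g$, and conclude that $\|\alpha^n(g)\|_T$ grows like $\lambda^n$, i.e.\ $g$ has exponential $\calH_m$-growth. The length computation at the end is fine (and can even be made exact: the $f^n$-image of a legal axis is an embedded $\alpha^n(g)$-invariant line, hence \emph{is} the axis of $\alpha^n(g)$, so $\|\alpha^n(g)\|_T=\lambda^n\|g\|_T$).

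The one step that wobbles is exactly the one you flag: closing up. Realigning at a repeated \emph{vertex} orbit ``via an element of that vertex's stabilizer'' does not obviously produce a legal seam: knowing that \emph{some} legal turn exists at that vertex gives no control over the specific turn formed by the incoming edge and a stabilizer-translate of the outgoing edge, and passing to a further power of $\alpha$ does not help, since legality is a property of $f$ on $T$ and is not improved by iterating. The fix is simpler than either of your proposed repairs, and it is what the paper does: extend the legal path until some \emph{edge} repeats its $G$-orbit, say $e_n=g\,e_m$ (this happens since $G\backslash T$ is finite). Then $g$ translates the legal segment from the initial vertex of $e_m$ to the initial vertex of $e_n$, and every seam turn of the resulting $g$-periodic line is a $G$-translate of the turn $(e_{n-1},e_n)$, which was chosen legal; since $f(ge)=\alpha(g)f(e)$, the gate structure is $G$-equivariant and these translates are legal too. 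So the line is a legal axis for $g$ with no vertex-group correction, no preliminary normalization of $f$ on the quotient graph, and no power of $\alpha$. With that replacement (or with your fallback of quoting the legal-axis construction from \cite{francaviglia_martino}), your argument coincides with the paper's proof.
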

 
\begin{proof}
Since the stretching factor is strictly larger than $1$, it suffices to find an element $g$ and a point $v$ in $T$ such that the segment $[v,gv]$ is legal (so that its images by the train track map grow precisely as $\lambda^n$ with $\lambda>1$). Start from a vertex $w$ and select an edge $e_1$ starting at $w$, ending at $w_1$. We observe that, by a property that we recorded before the statement, there is an edge starting at $w_1$ that makes a legal turn with $e_1$. Assume one has a path $e_1 \dots e_k$ making legal turns, one can continue and find $e_{k+1}$ also making a legal turn with $e_k$. Eventually, the edge $e_n$ will be in the same orbit as an earlier edge $e_m$, for $m<n$.  Then call $v$ the initial point of $e_m$, and $g$ the (unique) element such that $ge_m=e_n$.  We thus found a legal path as required, this proves the lemma. \end{proof}

\subsubsection{The case of large Scott complexity for $\calH_m$: hyperbolicity relative to polynomial $\calH_m$-growth}
 
Recall that the Scott complexity of the decomposition corresponding to $\calH_m$ is the quantity $(r,m)$, where $r$ is the rank of the free group. Small Scott complexity corresponds to $(0,2)$ and $(1,1)$, which are the cases when the corresponding Bass-Serre trees have exactly one orbit of edges.
 
Invoking Corollary 2.3 of \cite{dahmani_li} we know that $G\rtimes \langle t\rangle$ is relatively hyperbolic with respect to the suspensions of the maximal subgroups of polynomial $\calH_m$-growth (recall that by $\calH_m$-growth we mean growth in the tree-metric of a $(G,\calH_m)$-tree, see \cref{lem;H-growth}).  By \cref{lem;some_exp}, we are in the case that $G$ itself is not   polynomially $\calH_m$-growing for $\alpha$.  By \cite[Prop. 1.13]{dahmani_li}, if $Q$ is a maximal subgroup of $G$ that has polynomial $\calH_m$-growth for $\alpha$,  it is hyperbolic and of strictly smaller Kurosh rank than $G$.  We may therefore apply the induction assumption to the   polynomially $\calH_m$-growing  subgroups for $\alpha$.

Clearly a word-polynomially growing subgroup for $\alpha$ is a subgroup of a group of polynomial $\calH_m$-growth for $\alpha$. As a consequence, by the telescopic argument of \cref{sec;telescopic}, we have that, assuming the induction hypothesis, in the case that $T$ has more than two orbits of edges, the semi-direct product $G\rtimes_\alpha \langle t\rangle $ is relatively hyperbolic with respect to the suspensions of its polynomially growing subgroups.
 
\subsubsection{A case of small Scott complexity: the semidirect product of a preserved free product $H_1*H_2$}
 
First consider the case where $G=H_1*H_2$ and $H_1$ and $H_2$ are non-trivial, preserved by $\alpha$. {Note that the Scott complexity of this decomposition is $(0,2)$.} Then, $G\rtimes_\alpha \langle t \rangle$ is of the form $(H_1\rtimes \langle t_1 \rangle ) *_{t_1=t_2} (H_2\rtimes \langle t_2 \rangle )$, in which $t_i$ realizes $\alpha|_{H_i}$.
 
Since the Kurosh ranks of $H_1$ and $H_2$ are strictly less than that of $G$, we may apply the induction assumption to them. Their semidirect products are assumed to be relatively hyperbolic with respect to the polynomially growing subgroups for $\alpha$.
 
We discuss whether $t_i$ are parabolic or not in the groups $(H_1\rtimes \langle t_1 \rangle )$ and $(H_2\rtimes \langle t_2 \rangle )$. It is worth noting that since $t_i$ generates the cofactor $\bbZ$ of its semidirect product, it generates a maximal cyclic subgroup of $(H_i\rtimes \langle t_i \rangle )$.

We also observe that $t_i$ is parabolic in $H_i \rtimes \langle t_i \rangle$ if and only if $\alpha$ preserves a maximal polynomially $\|\cdot\|_w$-growing subgroup, since every parabolic subgroup is the suspension of a maximal polynomially $\|\cdot\|_w$-growing subgroup. 

If both $t_i$ are loxodromic in the relatively hyperbolic groups $(H_i\rtimes \langle t_i \rangle )$, the amalgam is relatively hyperbolic with respect to the family of conjugates of the parabolic subgroups of the factors (see \cref{thm;combination}). This proves the desired result. If one is parabolic, and not the other, (say $t_1$ is parabolic while $t_2$ is loxodromic) we may enrich the peripheral structure of $H_2 \rtimes \langle t_2 \rangle$ by adding the conjugates of this maximal cyclic subgroup $ \langle t_2 \rangle$, and it remains relatively hyperbolic (by Yang's peripheral extension theorem, \cref{theo;yang}). Indeed, the cyclic subgroup generated by $t_2$ is relatively quasiconvex, by \cite[Corollary 4.20]{osin}, and is malnormal because it is maximal cyclic. 
The action of $G \rtimes_{\alpha} \mathbb{Z}$ on the Bass-Serre tree of $(H_1\rtimes \langle t_1 \rangle ) *_{t_1=t_2} (H_2\rtimes \langle t_2 \rangle )$ is 2-acylindrical because if two different edges are adjacent to the vertex fixed by $(H_2\rtimes \langle t_2 \rangle )$ their stabilizers are different conjugates of  $\langle t_2 \rangle $ in this group, hence have trivial intersection by malnormality of $\langle t_2 \rangle $. Therefore the combination theorem (\cref{thm;combination}(1)) of \cite{dahmani_combination} still applies.  
 
If both $t_i$ are parabolic in $(H_i\rtimes \langle t_i \rangle )$, let $P_1$, $P_2$ be their respective maximal parabolic subgroups. In the notation of \cref{sec;free_product}, we have $P_1= A\rtimes \langle t_1\rangle$ and $P_2= B \rtimes \langle t_2\rangle$, but for notation purpose, we will write $A=Q_1$ and $B=Q_2$.  We may write \[(H_1\rtimes \langle t_1 \rangle ) *_{t_1=t_2} (H_2\rtimes \langle t_2 \rangle )\] as \[ \Big((H_1\rtimes \langle t_1 \rangle )\Asterisk_{P_1} [ P_1 *_{t_1=t_2} P_2] \Big) \Asterisk_{P_2} (H_2\rtimes \langle t_2 \rangle ). \]
 
Once again, the combination theorem \cite{dahmani_combination} (\cref{thm;combination} (2) and (3)) applies at every step of the combination, and one obtains that $G\rtimes_\alpha \bbZ$ is hyperbolic relative to the conjugates of the suspensions of the former polynomially growing subgroups of $H_1$ and $H_2$ and also the conjugates of $P_1 *_{t_1=t_2} P_2$.

It remains to see that $P_1 *_{t_1=t_2} P_2$ is a suspension of a polynomially growing subgroup of $G$ for $\alpha$. As mentioned,  $P_i = Q_i \rtimes \langle t_i \rangle$, and we know from \cref{sec;free_product} that $Q_1*Q_2$ is a maximal polynomially growing subgroup of $G$ for $\alpha$, and each $Q_i$ is preserved by $\alpha$. This describes   $P_1 *_{t_1=t_2} P_2$ as $(Q_1*Q_2) \ltimes \langle t_1\rangle$, thus as a suspension of a maximal polynomially growing subgroup.

\subsubsection{Another case of small Scott complexity: the semi-direct product of a free product $H*\bbZ$ with $H$ preserved.}

We do the same job for the case $G= H*\langle s \rangle$ in which $\alpha(H)=H$, which is the last case we need to consider.
 
Observe that we can assume that $\alpha(s) = sh^{-1}$ for some $h\in H$. We can then write $(H*\langle s \rangle) \rtimes_\alpha \langle t \rangle$ as 
\[(H*\langle s \rangle) \rtimes_\alpha \langle t \rangle = (H\rtimes \langle t \rangle) *^{(s)}_{\langle t\rangle, \langle th\rangle},\] meaning that in the rightmost HNN, the stable letter is $s$ and it conjugates $t$ to $th$.
 
The group $H\rtimes \langle t \rangle$ is, by induction hypothesis, relatively hyperbolic with respect to polynomially growing subgroups in $H$ for $\alpha$.
 
Again, the group generated by $t$ and $th$ are maximal cyclic groups in $H\rtimes \langle t \rangle$, since they generate the cofactor $\bbZ$.
And again we discuss according to the parabolicity of the elements $t$ and $th$ in $(H\rtimes \langle t \rangle)$. Recall from \cref{sec;freehnn}  that $A_0$ is the (possibly trivial, but unique) maximal $\|\cdot\|_w$-polynomially growing subgroup of $H$ that is fixed by $\alpha$, while $B_0$ is the (possibly trivial, but unique) maximal $\|\cdot\|_w$-polynomially growing subgroup such that $\alpha(B_0) = hB_0h^{-1}$. Recall also from \cref{sec;freehnn} that $K$ is the set of solutions of the membership equation $k^{-1}h^{-1}\alpha(k) \in A_0$ (of unknown $k$).

By induction, there is a relatively hyperbolic structure on $H\rtimes \langle t\rangle$ with peripheral subgroups the suspensions of the polynomially growing subgroups of $H$. Note that in this structure, $t$ is parabolic if and only if $A_0 \neq 1$, and $th$ is parabolic if and only if $B_0 \neq 1$. However, even when $A_0 = 1$, if $ K\neq \emptyset$, then by \cref{lem;Kcoset}, we have $K = \{k_0\}$, and $t$ normalizes the polynomially growing subgroup generated by $sk_0$ (which is not contained in $H$, see case 2 below).
By a result of Osin \cite[Corollary 4.20]{osin}, $\langle t\rangle$ is relatively quasiconvex in $H \rtimes \langle t\rangle$. The subgroup $\langle t\rangle$ is also malnormal, as otherwise $A_0$ would be nontrivial. 
We will therefore add the cyclic subgroup $\langle t\rangle$ to the peripheral structure of $H\rtimes \langle t\rangle$. The hypotheses of Theorem 1.1 of \cite{yang_ext_peripheral} (\cref{theo;yang}) are thus satisfied by this new peripheral structure, and  $H\rtimes \langle t\rangle$ with this new peripheral structure is still a relatively hyperbolic group.

In order to prove the theorem, we have to analyze four cases here (note that by \cref{coro;A_0B_0}, those are the only possible cases). 

\begin{enumerate}
    \item $A_0$ and $B_0$ are trivial, and $K=\emptyset$.
    \item either $A_0$ and $B_0$ are trivial, and $K=\{k_0\}$, or $A_0$ is non-trivial while $B_0$ is trivial and $K = \emptyset$, or 
 $A_0$ is trivial and $B_0$ is non-trivial, and $K=\emptyset$. 
    \item $A_0$ and $B_0$ are nontrivial and $K = \emptyset$.
    \item $A_0$ and $B_0$ are nontrivial and $K=\{k_0A_0\}$.
\end{enumerate}

The first case above is when both $t$ and $th$ are loxodromic. Here, the acylindrical combination theorem of \cite{dahmani_combination} (\cref{thm;combination} (1)) gives the conclusion. Indeed, $G \rtimes \langle t \rangle$ acts acylindrically on the Bass-Serre tree of the HNN extension $(H\rtimes \langle t \rangle) *^{(s)}_{\langle t\rangle, \langle th\rangle}$ of $H \rtimes \langle t\rangle$ since $G = H * \langle s\rangle$.

If one of them is loxodromic and the other is parabolic (case 2  above), also expanding the peripheral structure (as in the previous case of the free product of two invariant factors) we can still use the acylindrical combination theorem. 

Cases 3 and 4 correspond to both $t$ and $th$ being parabolic.  

Case 3, which is again similar to that of the previous subsection, is when $t$ and $th$ belong to two different parabolic subgroups, $A_0 \rtimes \langle t \rangle$ and $B_0 \rtimes \langle th \rangle$.  
Manipulation of presentations shows that, if $\dot B_0$ is another abstract copy of $B_0$,
\[ \begin{array}{ccl} 
(H*\langle s \rangle) \rtimes_\alpha \langle t \rangle & = & \left\langle H, s, t, \dot \tau, \dot B_0 \; |\; \begin{array}{ccc} \dot \tau & =& t, \\ \dot{\tau}^s & =& th, \\ {\dot B_0}^s & \equiv & B_0,\\ H^t & \equiv & \alpha(H).\end{array} \right\rangle \\ 
& = & \left[ \left( H\rtimes \langle t \rangle \right) \, \Asterisk_{A_0\rtimes \langle t\rangle} \left( \langle A_0, t\rangle *_{t=\dot \tau} \langle \dot B_0, \dot \tau \rangle \right) \right] \Asterisk^{(s)}_{\langle th, B_0 \rangle, \langle \dot \tau, \dot B_0 \rangle} \end{array}       \]

See also \cref{fig:moves} for the topological meaning of these identifications.

\begin{figure}
\centering
\includegraphics[width=\textwidth]{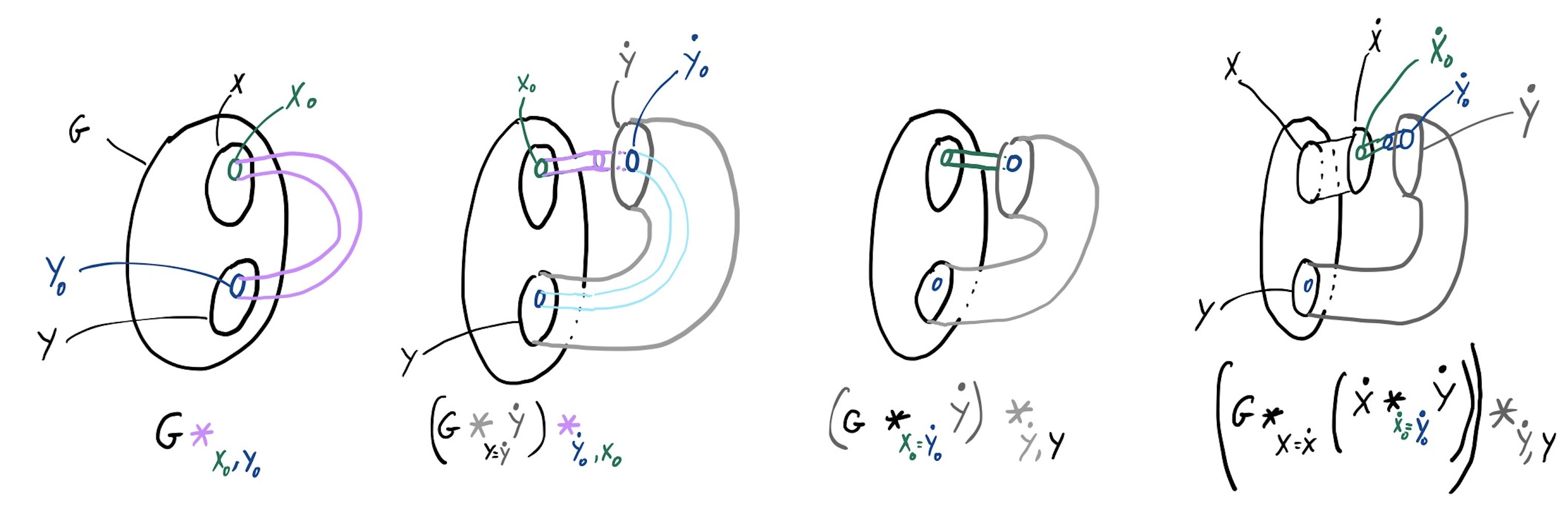}
\caption{Filling the subgroups of an HNN extension over some subgroups (the pictures show spaces of which one takes the fundamental group; the group is the same for each of the four pictures)}\label{fig:moves}
\end{figure}

In the last decomposition, one can apply the combination theorem of \cite{dahmani_combination} (\cref{thm;combination}) to each of the constructions, in turn, to obtain that $(H*\langle s \rangle) \rtimes_\alpha \langle t \rangle $ is relatively hyperbolic with respect to the conjugates of the parabolic subgroups of $H\rtimes \langle t \rangle$ except the two classes $A_0 \rtimes \langle t \rangle $ and $B_0 \rtimes \langle th \rangle $, but with in addition, the conjugacy class of $\left( \langle A_0, t\rangle *_{t=\dot \tau} \langle \dot B_0, \dot \tau \rangle \right)$ which is $\left( \langle A_0, t\rangle *_{t} \langle (B_0)^{s^{-1}}, t \rangle \right)$. Observe that $t$ normalizes both $A_0$ and $(B_0)^{s^{-1}}$, and that those two groups are polynomially growing for $\alpha$. It follows that $\left( \langle A_0, t\rangle *_{t=\dot \tau} \langle (B_0)^{s^{-1}}, t \rangle \right)  =   (A_0 * (B_0)^{s^{-1}}) \rtimes \langle t \rangle$. 
We thus recognize the suspension of a maximal polynomially growing subgroup, as constructed.

Assume now that $K$ is non-empty, and let $k_0\in K$ (case 4 above). Note that $t$ and $th$ are in different parabolic subgroups of $H \rtimes \langle t \rangle$ if and only if $B_0 \neq A_0$. If $B_0 = A_0$, we may take $k_0 = 1$. We may complete the previous calculation as 
 \[ \begin{array}{ccl} 
(H*\langle s \rangle) \rtimes_\alpha \langle t \rangle  =\left[ \left( H\rtimes \langle t \rangle \right) \, \Asterisk_{A_0\rtimes \langle t\rangle} \left( \langle A_0, t\rangle *_{t=\dot \tau} \langle \dot B_0, \dot \tau \rangle \right) \right] \Asterisk^{(sk_0)}_{\langle th, B_0 \rangle^{k_0}, \langle \dot \tau, \dot B_0 \rangle}.  \end{array}       \]

We thus see that the HNN extension is extending a peripheral subgroup of $H \rtimes_\alpha \langle t \rangle$, since $sk_0$ is in the same maximal polynomially growing subgroup as $\langle A_0, t\rangle$ (as seen in \cref{sec;freehnn}).

The group is therefore relatively hyperbolic with respect to  the conjugates of the parabolic subgroups of $H\rtimes \langle t \rangle$ except the two classes $A_0 \rtimes \langle t \rangle $ and $B_0 \rtimes \langle th \rangle $, but with in addition, the conjugacy class of $\left\langle \left( \langle A_0, t\rangle *_{t=\dot \tau} \langle \dot B_0, \dot \tau \rangle \right), sk_0\right\rangle$, which is $\left(\langle A_0, sk_0, t  \rangle   *_{t} \langle (B_0)^{s^{-1}}, t \rangle \right)$. Recall that by \cref{coro;A_0B_0}, one has $B_0^{s^{-1}} = A_0^{ (sk_0)^{-1}}$, so  the last parabolic subgroup is   $\langle A_0, sk_0, t  \rangle$. Since $t$ normalizes $\langle A_0, sk_0\rangle $, we have that this new peripheral subgroup is a suspension of a maximal polynomially growing subgroup of $H*\langle s\rangle$. 
 
This finishes the induction, and proves the theorem.

\bibliographystyle{alpha}
\bibliography{Relative_hyperbolicity_hyperbolic_by_cyclic_groups}
\end{document}